\newtheorem{theorem}{Theorem}[section]
\newtheorem{lemma}[theorem]{Lemma}
\newtheorem{proposition}[theorem]{Proposition}
\newtheorem{corollary}[theorem]{Corollary}
\theoremstyle{definition}
\newtheorem{definition}[theorem]{Definition}
\theoremstyle{remark}
\newtheorem{remark}[theorem]{Remark}
\numberwithin{equation}{section}
\begin{document}

\title[$\mathcal{R}(p,q)-$ analogs of discrete distributions]{$\mathcal{R}(p,q)-$ analogs of discrete distributions: general formalism and application}
\author[Mahouton Norbert Hounkonnou]{Mahouton Norbert Hounkonnou*}
\thanks{* Corresponding author}
\address{Mahouton Norbert Hounkonnou: International Chair in Mathematical Physics
	and Applications(ICMPA-UNESCO Chair), University of Abomey-Calavi, 072 B.P. 50 Cotonou, Republic of Benin}
\email{norbert.hounkonnou@cipma.uac.bj, with copy to hounkonnou@yahoo.fr }
\author{Fridolin Melong}
\address{Fridolin Melong: International Chair in Mathematical Physics
	and Applications(ICMPA-UNESCO Chair), University of Abomey-Calavi, 072 B.P. 50 Cotonou, Republic of Benin}
\email{fridomelong@gmail.com}

\subjclass[2010] {Primary 17B37; Secondary 60E05}

\keywords{	$\mathcal{R}(p,q)-$ calculus, discrete distributions, $\mathcal{R}(p,q)-$ deformed quantum algebras, generalized $q-$Quesne algebra.}

\begin{abstract}
In this paper, we define and discuss  $\mathcal{R}(p,q)$- deformations of  basic univariate discrete distributions  of the probability theory. We mainly focus on binomial, Euler, P\'olya and inverse P\'olya   distributions. We discuss relevant $\mathcal{R}(p,q)-$ deformed factorial moments  of a random variable, and establish associated  expressions of mean and variance.  Futhermore, we derive a recursion relation for the probability distributions.  Then, we apply  the same approach to build main distributional  properties characterizing  the  generalized $q-$ Quesne quantum algebra, used in physics. Other known results  in the literature are also recovered as particular cases. 
\end{abstract}

\maketitle
\section{Introduction}
Deformed quantum algebras, namely   $q-$ deformed  algebras \cite{Jimbo, O, Q} and their extensions to  $(p,q)-$analogs \cite{B1,CJ},  continue to attract much attention in mathematics and physics.
 Inspired by the $q-$ deformed quantum algebras,  Chung  and  Kang  developed a notion of $q-$ permutations and $q-$ combinations \cite{CK}.
The $q-$ combinatorics  and $q-$ hypergeometric series were examined in
\cite{Al-Salam,BS, CA3, Kim}.
A theory of $(p,q)-$ analogs of  binomial coefficients and $(p,q)-$ Stirling numbers was elaborated  in \cite{RC, WW}.
In the same vein, from the $q-$ combinatorics, several authors investigated the $q-$ discrete probability distributions such as $q-$ binomial, 
$q-$ Euler, $q-$ hypergeometric, $q-$ P\'olya,  $q-$ contagious, and $q-$ uniform distributions \cite{CA1,AKemp, K2}.
Recently, our research group was interested in the study of  quantum algebras, and, more especially, in the generalization of the well-known $(p,q)-$ Heisenberg algebras.
From  Odzijewicz's  work \cite{O}, we introduced the $\mathcal{R}(p,q)-$ deformed quantum  algebras  as  generalizations of known deformed quantum algebras \cite{HB1}.
In \cite{HB}, we performed the $\mathcal{R}(p,q)-$ differentiation and   integration, and deducted all relevant  particular cases of $q-$ and $(p,q)-$ deformations.
This opens a novel route for developing  the theory of $\mathcal{R}(p,q)-$ analogs of special numbers,  combinatorics,  and probability distributions.
Then,  the following question naturally arises:
How to construct  
discrete  probability distributions and  deduce their  properties  from  $\mathcal{R}(p,q)-$ deformed quantum  algebras?

This paper
is
organized as follows:
In section $2,$ we briefly recall the notions of  $\mathcal{R}(p,q)-$ deformed numbers,  $\mathcal{R}(p,q)-$ deformed factorial, 
$\mathcal{R}(p,q)-$binomial coefficients, $\mathcal{R}(p,q)-$ derivative,  relevant properties from our generalization of $q-$ Quesne algebra,  and some basic notations and definitions. 
Section $3$ is focused on   main results.  $\mathcal{R}(p,q)-$ deformed combinatorics and  $\mathcal{R}(p,q)-$ analogs of discrete distributions are defined. The mean value,  or expectation, and variance of  random variables of  distributions are computed.  
Then, we apply  the same approach to build main distributional  properties characterizing  the  generalized $q-$ Quesne quantum algebra, used in physics. Other known results  are also recovered as particular cases. We end 
with some concluding remarks in Section $4.$ 
\section{Quick overlook on known results}

\subsection{$\mathcal{R}(p,q)-$ deformed numbers} 
			Consider $ p$ and $q,$ two positive real numbers such that $ 0<q<p<1,$ and a given 
			meromorphic function defined on $\mathbb{C}\times\mathbb{C}$ by: \begin{equation}\label{r10}
			\mathcal{R}(u,v)= \sum_{s,t=-l}^{\infty}r_{st}u^sv^t,
			\end{equation}
			where $r_{st}$ are complex numbers, $l\in\mathbb{N}\backslash\left\lbrace 0\right\rbrace,$ $\mathcal{R}(p^n,q^n)>0,  \forall n\in\mathbb{N},$ and $\mathcal{R}(1,1)=0.$
			 We denote by  $\mathbb{D}_{R}=\left\lbrace z\in\mathbb{C}: |z|<R\right\rbrace$ a complex disc, where  $R$ is the radius of convergence of the series (\ref{r10}),  and by   $\mathcal{O}(\mathbb{D}_{R})$ the set of holomorphic functions defined on $\mathbb{D}_{R}.$
			Define the  $\mathcal{R}(p,q)-$ deformed numbers  by \cite{HB1}:
				\begin{equation}
				[n]_{\mathcal{R}(p,q)}:=\mathcal{R}(p^n,q^n),\quad n\in\mathbb{N}
				\end{equation}
generalizing known particular numbers  as follows:
\begin{itemize}
	\item  $q-$  Arick-Coon-Kuryskin deformation \cite{AC}
	\begin{equation*}
	\mathcal{R}(p,q):=\mathcal{R}(1,q)={\{p=1\}-q \over 1-q}
	\quad \mbox{and} \quad
	[n]_q ={1-q^n\over 1-q}.
	\end{equation*} 
	\item  $q-$ Quesne  deformation \cite{Q}
	\begin{equation*}
	\mathcal{R}(p,q):=\mathcal{R}(1,q)={\{p=1\}-q^{-1} \over 1-q}
	\quad \mbox{and}\quad
	[n]^Q_q ={1-q^{-n}\over q-1}.
	\end{equation*}
	\item $(p,q)-$ Jagannathan-Srinivasa deformation \cite{JS}
	\begin{equation*}
	\,\mathcal{R}(p,q)={p-q \over p-q}
	\quad \mbox{and}\quad
	[n]_{p,q}={p^n-q^n \over p-q}.
	\end{equation*}
	\item $(p^{-1},q)-$ Chakrabarty - Jagannathan deformation \cite{CJ}
	\begin{eqnarray*}
	\mathcal{R}(p,q)={1-p\,q \over (p^{-1}-q)p}
	\quad \mbox{and} \quad
	[n]_{p,q}={p^{-n}-q^n \over p^{-1}-q}.
	\end{eqnarray*}
	\item Hounkonnou-Ngompe  generalization of $q-$ Quesne deformation \cite{HmNe}
	\begin{eqnarray*}
	\mathcal{R}(p,q)={p\,q-1 \over (q-p^{-1})q}
	\quad \mbox{and} \quad 
	[n]^Q_{p,q}={p^{n}-q^{-n} \over q- p^{-1}}.
	\end{eqnarray*}
	\item $(p,q,\mu,\nu,g)-$ Hounkonnou-Ngompe  multi-parameter deformation \cite{Hounkonnou&Ngompe07a}
	\begin{eqnarray*}
	\,\mathcal{R}(p,q)=g(p,q){q^{\nu}\over p^{\mu}}{p\,q-1 \over (q-p^{-1})q}
	\quad \mbox{and} \quad 
	[n]^{\mu,\nu}_{p,q,g}=g(p,q){q^{\nu\,n}\over p^{\mu\,n}}{p^{n}-q^{-n} \over q- p^{-1}},
	\end{eqnarray*}
	where $0< pq<1,$ $p^{\mu}<q^{\nu-1},$ $p>1 $ and 
		$g$ is a well behaved real   non-negative function of deformation parameters $p$ and $q$ such that $g(p,q)\longrightarrow 1$ as$(p,q)\longrightarrow (1,1)$.
\end{itemize}
	Define now the
 $\mathcal{R}(p,q)-$ deformed factorials
\begin{equation}\label{s0}
[n]!_{\mathcal{R}(p,q)}:=\left \{
\begin{array}{l}
1\quad\mbox{for}\quad n=0\\
\\
\mathcal{R}(p,q)\cdots\mathcal{R}(p^n,q^n)\quad\mbox{for}\quad n\geq 1,
\end{array}
\right .
\end{equation}
and the  $\mathcal{R}(p,q)-$ deformed binomial coefficients
\begin{eqnarray}\label{bc}
\bigg[\begin{array}{c} m  \\ n\end{array} \bigg]_{\mathcal{R}(p,q)} := \frac{[m]!_{\mathcal{R}(p,q)}}{[n]!_{\mathcal{R}(p,q)}[m-n]!_{\mathcal{R}(p,q)}},\quad m,n=0,1,2,\cdots,\quad m\geq n
\end{eqnarray}
satisfying the relation
\begin{equation*}
\bigg[\begin{array}{c} m  \\ n\end{array} \bigg]_{\mathcal{R}(p,q)}=\bigg[\begin{array}{c} m  \\ m-n\end{array} \bigg]_{\mathcal{R}(p,q)},\quad m,n=0,1,2,\cdots,\quad m\geq n.
\end{equation*}
Consider the following linear operators defined on  $\mathcal{O}(\mathbb{D}_{R})$ by (see \cite{HB1} for more details):
\begin{eqnarray}
\;Q:\varPsi\longmapsto Q\varPsi(z):&=& \varPsi(qz),\\
\; P:\varPsi\longmapsto P\varPsi(z):&=& \varPsi(pz),
\end{eqnarray}
and the $\mathcal{R}(p,q)-$ derivative given by:
\begin{equation}\label{r5}
\partial_{\mathcal{R}( p,q)}:=\partial_{p,q}\frac{p-q}{P-Q}\mathcal{R}( P,Q)=\frac{p-q}{p^{P}-q^{Q}}\mathcal{R}(p^{P},q^{Q})\partial_{p,q}
\end{equation}
 extending known  derivatives as follows:
\begin{itemize}
	\item[(i)]  $q-$ Heine derivative \cite{Heine}.
	\begin{equation*}
	\mathcal{R}(p,q):=\mathcal{R}(1,q)=1
	\quad \mbox{and} \quad
	\partial_{q}\varPsi(z)=\frac{\varPsi(z)-\varPsi(qz)}{z(1-q)}.
	\end{equation*} 
	\item[(ii)]  $q-$ Quesne  derivative \cite{Q}.
	\begin{equation*}
	\mathcal{R}(p,q):=\mathcal{R}(1,q)={1-q^{-1} \over 1-q}
	\quad \mbox{and}\quad
	\partial_{q}\varPsi(z)=\frac{\varPsi(z)-\varPsi(q^{-1}z)}{z(q-1)}.
	\end{equation*}
	\item[(iii)] $(p,q)-$ Jagannathan-Srinivasa derivative \cite{JS}
	\begin{equation*}
	\mathcal{R}(p,q)=1\quad\mbox{and}\quad \partial_{p,q}\varPsi(z)=\frac{\varPsi(pz)-\varPsi(qz)}{z(p-q)}.
	\end{equation*}
	\item[(iv)] $(p^{-1},q)-$ Chakrabarty - Jagannathan derivative \cite{CJ}.
	\begin{eqnarray*}
		\mathcal{R}(p,q)={1-p\,q \over (p^{-1}-q)p}
		\quad \mbox{and} \quad
		\partial_{p^{-1},q}\varPsi(z)=\frac{\varPsi(p^{-1}z)-\varPsi(qz)}{z(p^{-1}-q)}.
	\end{eqnarray*}
	\item[(v)] Hounkonnou-Ngompe  generalization of $q-$ Quesne derivative \cite{HmNe}.
	\begin{eqnarray*}
		\mathcal{R}(p,q)={p\,q-1 \over (q-p^{-1})q}
		\quad \mbox{and} \quad 
		\partial^Q_{p,q}\varPsi(z)=\frac{\varPsi(pz)-\varPsi(q^{-1}z)}{z(q-p^{-1})}.
	\end{eqnarray*}
	\item[(vi)] $(p,q,\mu,\nu,g)-$ Hounkonnou-Ngompe  multi-parameter derivative \cite{Hounkonnou&Ngompe07a}.
	\begin{eqnarray*}
		\,\mathcal{R}(p,q)=g(p,q){q^{\nu}\over p^{\mu}}{p\,q-1 \over (q-p^{-1})q}
		\quad \mbox{and} \quad 
		\partial^{\mu,\nu}_{p,q}\varPsi(z)=g(p,q)\frac{\varPsi\Big({zq^{\nu}\over p^{\mu-1}}\Big)-\varPsi\Big({zq^{\nu -1}\over p^\mu}\Big)}{z(q-p^{-1})}.
	\end{eqnarray*}
\end{itemize}
The  algebra associated with the $\mathcal{R}(p,q)-$ deformation is a quantum algebra, denoted $\mathcal{A}_{\mathcal{R}(p,q)},$ generated by the set of operators $\{1, A, A^{\dagger}, N\}$ satisfying the following commutation relations:
\begin{eqnarray}
&& \label{algN1}
\quad A A^\dag= [N+1]_{\mathcal {R}(p,q)},\quad\quad\quad A^\dag  A = [N]_{\mathcal {R}(p,q)}.
\cr&&\left[N,\; A\right] = - A, \qquad\qquad\quad \left[N,\;A^\dag\right] = A^\dag
\end{eqnarray}
with its realization on  ${\mathcal O}(\mathbb{D}_R)$ given by:
\begin{eqnarray}\label{algNa}
A^{\dagger} := z,\qquad A:=\partial_{\mathcal {R}(p,q)}, \qquad N:= z\partial_z,
\end{eqnarray} 
where $\partial_z:=\frac{\partial}{\partial z}$ is the usual derivative on $\mathbb{C}.$
\subsection{Hounkonnou-Ngompe generalization of $q-$ Quesne algebra}
It corresponds to $\mathcal{R}(p,q)={p-q^{-1}\over q-p^{-1}}$, with generalized $q-$Quesne number and factorial \cite{HmNe}:
	\begin{equation}
	[n]^Q_{p,q}={p^n-q^{-n}\over q-p^{-1}},\quad [n]!^Q_{p,q}:=\left \{
	\begin{array}{l}
	1\quad\mbox{for}\quad n=0\\
	\\
	
	[1]^Q_{p,q}\cdots[n]^Q_{p,q}\quad\mbox{for}\quad n\geq 1,
	\end{array}
	\right .
	\end{equation}
and  following properties:
	\begin{eqnarray*}
	\;[-y]_{p,q}^Q&=& -p^{-y}q^y[y]_{p,q}^Q,\label{Qeq1}\\
	\;[x+y]_{p,q}^Q&=& q^{-y}[x]_{p,q}^Q+p^x[y]_{p,q}^Q= p^y[x]_{p,q}^Q+q^{-x}[y]_{p,q}^Q,\label{Qeq2}\\
	\;[x-y]_{p,q}^Q&=& q^{y}[x]_{p,q}^Q-p^{x-y}q^y[y]_{p,q}^Q= p^{-y}[x]_{p,q}^Q+p^{-y}q^{y-x}[y]_{p,q}^Q,\label{Qeq3}\\
	\;[x]_{p,q}^Q &=& \frac{q-p^{-1}}{p-q^{-1}}[2]_{p,q}^Q[x-1]_{p,q}^Q-pq^{-1}[x-2]_{p,q}^Q.\label{Qeq4}
	\end{eqnarray*}
The  generalized $q-$Quesne binomial coefficient
	\begin{eqnarray}
	\bigg[\begin{array}{c} n \\ \kappa \end{array}\bigg]_{p,q}^Q=
	\frac{[n]^Q_{p,q}!}{[\kappa]^Q_{p,q}![n-\kappa]^Q_{p,q}!},
	\quad 0\leq \kappa \leq n \in\mathbb{N},\label{Qeq6}
	\end{eqnarray}
satisfies the relations
\begin{small}
	\begin{eqnarray*}
	\bigg[\begin{array}{c} n \\ k \end{array}\bigg]_{p,q}^Q
	&=& \bigg[\begin{array}{c} n \\ n-k \end{array}\bigg]_{p,q}^Q=
	p^{k(n-k)}\bigg[\begin{array}{c} n \\ k \end{array}\bigg]_{1/qp}=
	p^{k(n-k)}\bigg[\begin{array}{c} n \\ n-k \end{array}\bigg]_{1/qp}\label{Qeq7}\\ 
	\;\left[\begin{array}{c} n+1 \\ k \end{array}\right]_{p,q}^Q &=&
	p^k\bigg[\begin{array}{c} n \\ k \end{array}\bigg]_{p,q}^Q
	+q^{-n-1+k}\bigg[\begin{array}{c} n \\ k-1 \end{array}\bigg]_{p,q}^Q,\label{Qeq8}\\
	\;\bigg[\begin{array}{c} n+1 \\ k \end{array}\bigg]_{p,q}^Q &=&
	p^{k}\bigg[\begin{array}{c} n \\ k \end{array}\bigg]_{p,q}^Q +
	p^{n+1-k}\bigg[\begin{array}{c} n \\ k-1 \end{array}\bigg]_{p,q}^Q
	-(p^n-q^{-n})
	\bigg[\begin{array}{c} n-1 \\ k-1 \end{array}\bigg]_{p,q}^Q.
	\end{eqnarray*}	
\end{small}
	Finally, the algebra $\mathcal{A}^Q_{p,q}$,
	generated by the set $\{1, A, A^{\dagger}, N \}$,
	associated with the  generalized $q-$Quesne deformation, satisfies the following commutation relations:
	\begin{eqnarray}
	p^{-1}A\;A^\dag- A^\dag A= q^{-N-1}, \quad&& qA\;A^\dag- A^\dag A= p^{N+1}\cr
	[N,\;A^\dag]= A^\dag,\qquad\qquad\qquad&& [N,\;A]= -A.\label{Qalg}
	\end{eqnarray}
	 We also have the following identities used in the sequel:
	\begin{equation}
	[n]_{p,q^{-1}}= p^{-1}\,q[n]^Q_{p,q} \quad  \mbox{and}\quad \bigg[\begin{array}{c}
	n \atop \\ \kappa
	\end{array}\bigg]_{p,q^{-1}}=\bigg[\begin{array}{c}
	n \atop \\ \kappa
	\end{array}\bigg]^Q_{p,q},
	\end{equation} 
	\begin{equation}
	[n]^{\mu,\nu}_{p,q,g}=g(p,q){q^{\nu\,n}\over p^{\mu\,n}}[n]^Q_{p,q}\quad \mbox{and}\quad \bigg[\begin{array}{c}
	n \atop \\ \kappa
	\end{array}\bigg]^{\mu,\nu}_{p,q,g}={q^{\nu\kappa(n-\kappa)}\over p^{\mu\kappa(n-\kappa)}}\bigg[\begin{array}{c}
	n \atop \\ \kappa
	\end{array}\bigg]^Q_{p,q},
	\end{equation}
	where $\mu,$ $\nu,$ $p,$ and $q$ are such that
	$0< pq<1,$ $p^{\mu}<q^{\nu-1},$ $p>1.$
 $g$ is a well behaved real   non-negative function of deformation parameters $p$ and $q$ such that $g(p,q)\longrightarrow 1$ as $(p,q)\longrightarrow (1,1)$.
	\section{Main results}
	\subsection{$\mathcal{R}(p,q)-$ deformed combinatorics}
	Given the above results,  we now  introduce functions $\epsilon_i >0,$ with $i=1,2,$  characterizing  particular deformations as follows:
	\begin{itemize}
		\item[(i)] $q-$ Arick-Coon-Kuryskin  deformation
		\begin{equation*}
		\epsilon_1=1 \quad \mbox{and} \quad \epsilon_2=q.
		\end{equation*}
		\item[(ii)]  $q-$ Quesne deformation
		\begin{equation*}
		\epsilon_1=1 \quad \mbox{and} \quad \epsilon_2=q^{-1}.
		\end{equation*}
		\item[(iii)] $(p,q)-$ Jagannathan-Srinivasa deformation
		\begin{equation*}
		\epsilon_1=p \quad \mbox{and} \quad \epsilon_2=q.
		\end{equation*}
	\item[(iv)] $(p^{-1},q)-$ Chakrabarty -Jagannathan deformation
	\begin{equation*}
	\epsilon_1=p^{-1} \quad \mbox{and} \quad \epsilon_2=q.
	\end{equation*}
	\item[(v)] Hounkonnou-Ngompe  generalization of $q-$ Quesne deformation
	\begin{equation*}
	\epsilon_1=p \quad \mbox{and} \quad \epsilon_2=q^{-1}.
	\end{equation*}
	\item[(vi)] $(p,q,\mu,\nu,g)-$ Hounkonnou-Ngompe   multi-parameter deformation
	\begin{equation*}
	\epsilon_1=p \quad \mbox{and} \quad \epsilon_2=q^{-1}.
	\end{equation*}
\end{itemize}
	The  $j^{th}-$ order factorial of the $\mathcal{R}(p,q)-$ number  is given by : 
	\begin{equation}\label{a1}
	[n]_{j,\mathcal{R}(p,q)}
	:= \prod_{v=0}^{j-1}\mathcal{R}(p^{n-v},q^{n-v}),\quad 
	\end{equation}
	where $n\in\mathbb{N}$ and $j\in\mathbb{N}\backslash\{0\}.$
	
	The  $\mathcal{R}(p,q)-$ deformed shifted factorial is expressed as follows:
	\begin{equation}\label{a}
	\big(x \oplus y\big)^n_{\mathcal{R}(p,q)}: = \displaystyle \prod_{i=1}^{n}\big(x\,\epsilon^{i-1}_1 + y\,\epsilon^{i-1}_2\big),\quad \mbox{with}\quad  \big(x \oplus y\big)^0_{\mathcal{R}(p,q)}: =1, 
	\end{equation}
	where  $x,y\in\mathbb{N},$ while
	the $\mathcal{R}(p,q)-$ deformed  Euler formula is developed as:
	\begin{equation}\label{b}
	\big(x \oplus y\big)^n:= \sum_{\kappa=0}^{n}\bigg[\begin{array}{c}
	n \atop \\ \kappa
	\end{array}\bigg]_{\mathcal{R}(p,q)}\,\epsilon_1^{{n-\kappa \choose 2}}\,\epsilon_2^{\kappa \choose 2}\,x^{n-\kappa}\,y^\kappa,\quad x,y\in\mathbb{N}.
	\end{equation}
	  The case of $q-$factorial in  \cite{CA1,K2}
			 corresponds to $\mathcal{R}(p,q)=q,$  $\epsilon_1=1$ and $\epsilon_2=q$ leading to :
			 \begin{itemize}
			 	\item 
				The  $j^{th}-$ order factorial of the $q-$ number:
				\begin{equation*}
				[n]_{j,q}
				:= \prod_{v=0}^{j-1}[n-v]_q,\quad  n\in\mathbb{N}\quad \mbox{and}\quad j\in\mathbb{N}\backslash\{0\},
				\end{equation*}
				\item  The  $q-$ deformed shifted factorial:
				\begin{equation*}
				\big(x \oplus y\big)^n_{q}: = \displaystyle \prod_{i=1}^{n}\big(x + y\,q^{i-1}\big),\quad \mbox{with}\quad  \big(x \oplus y\big)^0_{q}: =1, 
				\end{equation*}
				where  $x,y\in\mathbb{N}.$
				\item The $q-$ deformed  Euler formula:
				\begin{equation*}
				\big(x \oplus y\big)_q^n:= \sum_{\kappa=0}^{n}\bigg[\begin{array}{c}
				n \atop \\ \kappa
				\end{array}\bigg]_{q}\,q^{\kappa \choose 2}\,x^{n-\kappa}\,y^\kappa,\quad x,y\in\mathbb{N}.
				\end{equation*}
			\end{itemize}
				\begin{theorem}
					The $\mathcal{R}(p,q)-$ deformed Vandermonde's formula is given by:
					\begin{small}
					\begin{eqnarray}\label{vd1}
					[u+v]_{n,\mathcal{R}(p,q)}
					= \sum_{\kappa=0}^{n}\bigg[\begin{array}{c} n  \\ \kappa\end{array} \bigg]_{\mathcal{R}(p,q)}\epsilon_1^{\kappa(v-n+\kappa)}\epsilon_2^{(n-\kappa)(u-\kappa)}[u]_{\kappa,\mathcal{R}(p,q)}[v]_{n-\kappa,\mathcal{R}(p,q)},
					\end{eqnarray}
					or, equivalently,
					\begin{eqnarray}\label{vd2}
					[u+v]_{n,\mathcal{R}(p,q)}
					= \sum_{\kappa=0}^{n}\bigg[\begin{array}{c} n  \\ \kappa\end{array} \bigg]_{\mathcal{R}(p,q)}\epsilon_1^{(n-\kappa)(u-\kappa)}\epsilon_2^{\kappa(v-n+\kappa)}[u]_{\kappa,\mathcal{R}(p,q)}[v]_{n-\kappa,\mathcal{R}(p,q)},
					\end{eqnarray}
				\end{small}
					where $n$ is a positive integer.
				\end{theorem}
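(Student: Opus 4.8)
The plan is to prove (\ref{vd1}) by induction on $n$, driven by two auxiliary identities that reflect the two–term structure common to all the admissible deformations (i)--(vi). First I would record the \emph{addition formula}
\begin{equation*}
[u+v]_{\mathcal{R}(p,q)} = \epsilon_1^{v}\,[u]_{\mathcal{R}(p,q)} + \epsilon_2^{u}\,[v]_{\mathcal{R}(p,q)} = \epsilon_2^{v}\,[u]_{\mathcal{R}(p,q)} + \epsilon_1^{u}\,[v]_{\mathcal{R}(p,q)},
\end{equation*}
which for the generalized $q$-Quesne case is exactly the relation $[x+y]_{p,q}^Q = p^{y}[x]_{p,q}^Q + q^{-x}[y]_{p,q}^Q = q^{-y}[x]_{p,q}^Q + p^{x}[y]_{p,q}^Q$ recalled in Section~2, and which is verified in the same one-line fashion for each deformation since in every case $[n]_{\mathcal{R}(p,q)}$ is proportional to $(\epsilon_1^{n}-\epsilon_2^{n})/(\epsilon_1-\epsilon_2)$. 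Writing $[n+1]!_{\mathcal{R}(p,q)} = [n+1]_{\mathcal{R}(p,q)}\,[n]!_{\mathcal{R}(p,q)}$ and applying the second form of the addition formula to $[n+1]_{\mathcal{R}(p,q)}=[\kappa+(n+1-\kappa)]_{\mathcal{R}(p,q)}$ then yields the \emph{Pascal-type recurrence}
\begin{equation*}
\bigg[\begin{array}{c} n+1 \\ \kappa\end{array}\bigg]_{\mathcal{R}(p,q)} = \epsilon_1^{\kappa}\bigg[\begin{array}{c} n \\ \kappa\end{array}\bigg]_{\mathcal{R}(p,q)} + \epsilon_2^{n+1-\kappa}\bigg[\begin{array}{c} n \\ \kappa-1\end{array}\bigg]_{\mathcal{R}(p,q)},
\end{equation*}
with the boundary convention that binomial coefficients with $\kappa<0$ or $\kappa>n$ vanish and $[w]_{0,\mathcal{R}(p,q)}=1$.

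For the base case $n=1$ the right-hand side of (\ref{vd1}) collapses to $\epsilon_2^{u}[v]_{\mathcal{R}(p,q)} + \epsilon_1^{v}[u]_{\mathcal{R}(p,q)}$, which equals $[u+v]_{1,\mathcal{R}(p,q)}=[u+v]_{\mathcal{R}(p,q)}$ precisely by the addition formula. For the inductive step I would start from (\ref{vd1}) at level $n$, multiply both sides by $[u+v-n]_{\mathcal{R}(p,q)}$, and use $[u+v]_{n+1,\mathcal{R}(p,q)} = [u+v]_{n,\mathcal{R}(p,q)}\,[u+v-n]_{\mathcal{R}(p,q)}$ on the left. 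On the right I would split $[u+v-n]_{\mathcal{R}(p,q)} = [(u-\kappa)+(v-n+\kappa)]_{\mathcal{R}(p,q)}$ through the first form of the addition formula, producing two sums; the telescoping identities $[u]_{\kappa,\mathcal{R}(p,q)}[u-\kappa]_{\mathcal{R}(p,q)} = [u]_{\kappa+1,\mathcal{R}(p,q)}$ and $[v]_{n-\kappa,\mathcal{R}(p,q)}[v-n+\kappa]_{\mathcal{R}(p,q)} = [v]_{n-\kappa+1,\mathcal{R}(p,q)}$ absorb the new factors into order-factorials of raised order. After shifting the summation index $\kappa\mapsto\kappa-1$ in the first sum, both sums carry the common factor $\epsilon_1^{\kappa(v-(n+1)+\kappa)}\epsilon_2^{(n+1-\kappa)(u-\kappa)}[u]_{\kappa,\mathcal{R}(p,q)}[v]_{n+1-\kappa,\mathcal{R}(p,q)}$, and the leftover powers of $\epsilon_1,\epsilon_2$ merge the two binomial coefficients into $\big[\begin{smallmatrix} n+1 \\ \kappa\end{smallmatrix}\big]_{\mathcal{R}(p,q)}$ via the Pascal-type recurrence, delivering (\ref{vd1}) at level $n+1$.

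The delicate point, and the step I expect to be the main obstacle, is the exponent bookkeeping: one must verify that after the index shift the residual $\epsilon_2^{\,n+1-\kappa}$ surviving from the first sum and the residual $\epsilon_1^{\,\kappa}$ surviving from the second sum are \emph{exactly} the weights demanded by the Pascal-type recurrence, while the remaining exponents collapse to $\kappa\big(v-(n+1)+\kappa\big)$ and $(n+1-\kappa)(u-\kappa)$ respectively. Keeping the two quadratic exponents $\kappa(v-n+\kappa)$ and $(n-\kappa)(u-\kappa)$ aligned across the reindexing is where sign and off-by-one errors are most likely to enter.

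Finally, the equivalent form (\ref{vd2}) requires no separate induction. Performing the substitution $u\leftrightarrow v$ together with the reindexing $\kappa\mapsto n-\kappa$ in (\ref{vd1}), and invoking the symmetry $\big[\begin{smallmatrix} n \\ n-\kappa\end{smallmatrix}\big]_{\mathcal{R}(p,q)} = \big[\begin{smallmatrix} n \\ \kappa\end{smallmatrix}\big]_{\mathcal{R}(p,q)}$, interchanges the roles of $\epsilon_1$ and $\epsilon_2$ in the two quadratic exponents and converts (\ref{vd1}) directly into (\ref{vd2}).
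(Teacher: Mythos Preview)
Your argument is correct and is essentially the paper's own proof: both establish the first-order recursion $T_n=[u+v-n+1]_{\mathcal{R}(p,q)}\,T_{n-1}$ for the right-hand sum by pairing the addition formula with the Pascal-type recurrence (\ref{bc1}), and both obtain (\ref{vd2}) from (\ref{vd1}) via the substitution $u\leftrightarrow v$, $\kappa\mapsto n-\kappa$. The only difference is cosmetic---the paper applies (\ref{bc1}) to the binomial coefficient first and then factors out $[u+v-n+1]_{\mathcal{R}(p,q)}$, whereas you multiply through by $[u+v-n]_{\mathcal{R}(p,q)}$ first and then recombine via Pascal, which is the same computation read in the opposite direction.
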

		\begin{proof}
			For $n\in \mathbb{N}\backslash \{0\},$ we consider the following expression:
			\begin{small}
			\begin{eqnarray*}
			T_n(u;v)_{\mathcal{R}(p,q)}:= \sum_{\kappa=0}^{n}\bigg[\begin{array}{c} n  \\ \kappa\end{array} \bigg]_{\mathcal{R}(p,q)}\epsilon_1^{\kappa(v-n+\kappa)}\epsilon_2^{(n-\kappa)(u-\kappa)}\,[u]_{\kappa,\mathcal{R}(p,q)}[v]_{n-\kappa,\mathcal{R}(p,q)}.
			\end{eqnarray*}
			\end{small}
			For $n=1,$ we have
			$T_1(u;v)_{\mathcal{R}(p,q)}
			= [u+v]_{\mathcal{R}(p,q)}.$ Using the recursion relation 
			
			\begin{equation}\label{bc1}
			\bigg[\begin{array}{c} x  \\ \kappa \end{array} \bigg]_{\mathcal{R}(p,q)} = \epsilon^{\kappa}_1\,\bigg[\begin{array}{c} x -1 \\ \kappa \end{array} \bigg]_{\mathcal{R}(p,q)} + \epsilon^{x-\kappa}_2\,\bigg[\begin{array}{c} x-1  \\ \kappa-1 \end{array} \bigg]_{\mathcal{R}(p,q)},\quad\kappa\in\mathbb{N}
			\end{equation}
			 and 
			\begin{small}
			\begin{eqnarray*}\label{vdc}
			&&[u+v-n+1]_{\mathcal{R}(p,q)}
			\,[u]_{\kappa,\mathcal{R}(p,q)}[v]_{n-\kappa-1,\mathcal{R}(p,q)}
			=  \epsilon_2^{u-\kappa}[u]_{\kappa,\mathcal{R}(p,q)}[v]_{n-\kappa,\mathcal{R}(p,q)}\cr&&
			\qquad\qquad\qquad\qquad\qquad\qquad\qquad +
			\epsilon_1^{v-n+\kappa+1}
			[u]_{\kappa+1,\mathcal{R}(p,q)}[v]_{n-\kappa-1,\mathcal{R}(p,q)},	
			\end{eqnarray*}
			\end{small}
			we obtain
			\begin{small}
			\begin{eqnarray*}\label{spa}
				T_n(u;v)_{\mathcal{R}(p,q)} &=& \sum_{\kappa=0}^{n-1}\bigg[\begin{array}{c} n-1  \\ \kappa\end{array} \bigg]_{\mathcal{R}(p,q)}\epsilon_1^{\kappa(v-n+\kappa+1)}\epsilon_2^{(n-\kappa)(u-\kappa)}\,[u]_{\kappa,\mathcal{R}(p,q)}[v]_{n-\kappa,\mathcal{R}(p,q)}\nonumber\\
				\nonumber\\
				&+& \sum_{\kappa=0}^{n-1}\bigg[\begin{array}{c} n-1  \\ \kappa\end{array} \bigg]_{\mathcal{R}(p,q)}\epsilon_1^{(\kappa+1)(v-n+\kappa+1)}\epsilon_2^{(n-\kappa-1)(u-\kappa)}\nonumber\\
				&\times&[u]_{\kappa+1,\mathcal{R}(p,q)}[v]_{n-\kappa-1,\mathcal{R}(p,q)}\nonumber\\
				&=& \big[u+v-n+1\big]_{\mathcal{R}(p,q)}\,T_{n-1}(u;v)_{\mathcal{R}(p,q)}.
			\end{eqnarray*}
			\end{small}
			Therefore, for $n\in \mathbb{N}\backslash \{0\},$  $T_n(u;v)_{\mathcal{R}(p,q)}$ satisfies the first-order recursion relation 
			\begin{equation*}
			T_n(u;v)_{\mathcal{R}(p,q)} =  [u+v-n+1]_{\mathcal{R}(p,q)}\,T_{n-1}(u;v)_{\mathcal{R}(p,q)},
			\end{equation*}
			with $T_1(x;y)_{\mathcal{R}(p,q)} = [u+v]_{\mathcal{R}(p,q)}.$ Recursively,  
			it follows that $T_n(u;v)_{\mathcal{R}(p,q)} =
			[u+v]_{n,\mathcal{R}(p,q)}.$
			Therefore, we get  (\ref{vd1}). Finally, interchanging $x$ and $y,$ and  replacing $\kappa$ by $n-\kappa,$ the expression (\ref{vd1}) is rewritten in the form (\ref{vd2}). 
		\end{proof}	 
Taking $\mathcal{R}(p,q)=q,$   $\epsilon_1=1$ and $\epsilon_2=q$,  the $q-$ deformed Vandermonde's  formula obtained in \cite{CA2,CA1} is recovered as:
\begin{eqnarray*}
[u+v]_{n, q}
&=& \sum_{\kappa=0}^{n}\bigg[\begin{array}{c} n  \\ \kappa\end{array} \bigg]_{q} q^{(n-\kappa)(u-\kappa)}\,[u]_{\kappa, q}[v]_{n-\kappa, q}\nonumber\\
&=& \sum_{\kappa=0}^{n}\bigg[\begin{array}{c} n  \\ \kappa\end{array} \bigg]_{q} q^{\kappa(v-n+\kappa)}\,[u]_{\kappa, q}[v]_{n-\kappa, q},
\end{eqnarray*}
where $n$ is a positive integer.

For $\mathcal{R}(p,q)=((q-p^{-1})q)^{-1}(p\,q-1),$ involving $\epsilon_1=p,$   $\epsilon_2=q^{-1}$   
and $n,$  a positive integer, the generalized $q-$ Quesne Vandermonde's formula can be derived as follows:
\begin{eqnarray*}\label{vdga}
	{[u+v]^Q_{n,p,q}}
	&=& \sum_{\kappa=0}^{n}\bigg[\begin{array}{c} n  \\ \kappa\end{array} \bigg]^Q_{p,q}p^{\kappa(v-n+\kappa)}q^{-(n-\kappa)(u-\kappa)}{[u]^Q_{\kappa,p,q}\,[v]^Q_{n-\kappa,p,q}}
	\\
	&=& \sum_{\kappa=0}^{n}\bigg[\begin{array}{c} n  \\ \kappa\end{array} \bigg]^Q_{p,q}p^{(n-\kappa)(u-\kappa)}q^{-\kappa(v-n+\kappa)}{[u]^Q_{\kappa,p,q}\,[v]^Q_{n-\kappa,p,q}}.
\end{eqnarray*}
\begin{theorem} 
	The negative $\mathcal{R}(p,q)-$deformed Vandermonde's formula is given by:
	\begin{small}
	\begin{eqnarray}\label{nvd1}
	{[u+v]_{-n,\mathcal{R}(p,q)}}
	= \sum_{\kappa=0}^{\infty}\bigg[\begin{array}{c} -n  \\ \kappa\end{array} \bigg]_{\mathcal{R}(p,q)}\epsilon_1^{\kappa(v+n+\kappa)}\epsilon_2^{(-n-\kappa)(u-\kappa)}{[u]_{\kappa,\mathcal{R}(p,q)}[v]_{-n-\kappa,\mathcal{R}(p,q)}},
	\end{eqnarray}
	or
	\begin{eqnarray}\label{nvd2}
	{[u+v]_{-n,\mathcal{R}(p,q)}}
	= \sum_{\kappa=0}^{\infty}\bigg[\begin{array}{c} -n  \\ \kappa\end{array} \bigg]_{\mathcal{R}(p,q)}\epsilon_1^{(-n-\kappa)(u-\kappa)}\epsilon_2^{\kappa(v+n+\kappa)}{[u]_{\kappa,\mathcal{R}(p,q)}[v]_{-n-\kappa,\mathcal{R}(p,q)}},
	\end{eqnarray}
\end{small}
	where  
	$n$ is a positive integer.
\end{theorem}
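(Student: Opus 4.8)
The plan is to repeat, at negative order, the first-order recursion in the order parameter that produced (\ref{vd1}). First I would pin down the negative-order symbols. Extending the product (\ref{a1}) through its one-step rule $[x]_{j,\mathcal{R}(p,q)}=[x]_{j-1,\mathcal{R}(p,q)}\,\mathcal{R}(p^{x-j+1},q^{x-j+1})$ from $j\in\mathbb{N}\backslash\{0\}$ to all integers forces
\begin{equation*}
[x]_{-n,\mathcal{R}(p,q)}=\frac{1}{[x+n]_{n,\mathcal{R}(p,q)}},\qquad n\in\mathbb{N}\backslash\{0\},
\end{equation*}
and $\big[{-n \atop \kappa}\big]_{\mathcal{R}(p,q)}$ is then read off (\ref{bc}) with this extended factorial. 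Writing $S_{-n}(u;v)$ for the right-hand side of (\ref{nvd1}), the normalisation $\mathcal{R}(1,1)=0$ kills every $\kappa\geq 1$ term when $n=0$, so $S_{0}(u;v)=1=[u+v]_{0,\mathcal{R}(p,q)}$; this is the base of the induction.

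The core step is to show that $S_{-n}$ satisfies the same recursion as the generalized factorial, namely
\begin{equation*}
[u+v+n]_{\mathcal{R}(p,q)}\,S_{-n}(u;v)=S_{-(n-1)}(u;v).
\end{equation*}
I would derive this exactly as in the proof of (\ref{vd1}): apply the Pascal-type rule (\ref{bc1}) to the binomial symbol of order $-(n-1)$, then use the $\mathcal{R}(p,q)-$ addition and factorial-peeling identities (the $n\mapsto -n$ images of those used for (\ref{vd1})) to reorganise the two resulting sums, after a shift $\kappa\mapsto\kappa-1$ in one of them, so that the common factor $[u+v+n]_{\mathcal{R}(p,q)}$ is extracted and the remainder is precisely $S_{-n}(u;v)$. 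Since (\ref{bc1}) and these identities are purely algebraic relations among the $\mathcal{R}(p,q)-$ numbers and factorials, they persist verbatim once the order is allowed to be negative, and the exponents of $\epsilon_1,\epsilon_2$ are the $n\mapsto -n$ shadow of those appearing in the preceding theorem.

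Granting the recursion and base case, induction on $n$ yields $S_{-n}(u;v)=[u+v]_{-n,\mathcal{R}(p,q)}$ for every positive integer $n$, the step being legitimate because $[u+v+n]_{\mathcal{R}(p,q)}=\mathcal{R}(p^{u+v+n},q^{u+v+n})\neq 0$ by the standing hypothesis $\mathcal{R}(p^{m},q^{m})>0$. This establishes (\ref{nvd1}); the equivalent form (\ref{nvd2}) follows by interchanging $u$ and $v$ and relabelling the summation index, just as (\ref{vd2}) was obtained from (\ref{vd1}).

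The step I expect to be the real obstacle is not the algebra but the passage from a finite to an infinite sum. Unlike (\ref{vd1}), the symbol $\big[{-n \atop \kappa}\big]_{\mathcal{R}(p,q)}$ never terminates, so $S_{-n}$ is a genuine series; splitting it by (\ref{bc1}), reindexing one half, and recombining are only valid under absolute convergence. The hard part is therefore to check, using $0<q<p<1$ together with the growth of the factorial $[v+n+\kappa]_{n+\kappa,\mathcal{R}(p,q)}$ in the denominator of $[v]_{-n-\kappa,\mathcal{R}(p,q)}$, that the series converges on $\mathbb{D}_{R}$ and that the termwise rearrangements are justified; once this is secured, the recursive argument closes as above.
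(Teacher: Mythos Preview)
Your proposal is correct and follows essentially the same route as the paper: induction on $n$ via a first-order recursion for the right-hand side, obtained by applying the Pascal rule (\ref{bc1}) together with the negative-order analogue of the addition/factorial-peeling identity, and then deriving (\ref{nvd2}) from (\ref{nvd1}) by the $u\leftrightarrow v$ symmetry. The only minor differences are that the paper takes $n=1$ rather than $n=0$ as its base case, and that the paper does not explicitly address the convergence issue you (rightly) flag for the infinite sum.
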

\begin{proof}
	 For $n\in\mathbb{N}\backslash\{0\},$ we consider the following  expression: 
	 \begin{small}
\begin{eqnarray*}\label{nvd3}
H_n(u;v)_{\mathcal{R}(p,q)}= \sum_{\kappa=0}^{\infty}\bigg[\begin{array}{c} -n  \\ \kappa\end{array} \bigg]_{\mathcal{R}(p,q)}\epsilon_1^{\kappa(v+n+\kappa)}\epsilon_2^{(-n-\kappa)(u-\kappa)}{[u]_{\kappa,\mathcal{R}(p,q)}[v]_{-n-\kappa,\mathcal{R}(p,q)}}.
\end{eqnarray*}
\end{small}
For $n=1,$ we have
\begin{equation*}\label{nvd6}
H_1(u;v)_{\mathcal{R}(p,q)}= {1\over \big[u+v+1\big]_{\mathcal{R}(p,q)}}.
\end{equation*}
Using  the relation (\ref{bc1}) and  
\begin{small}
\begin{eqnarray*}\label{nvd8}
	&&[u+v+n+1]_{\mathcal{R}(p,q)}{[u]_{\kappa,\mathcal{R}(p,q)}[v]_{-n-\kappa-1,\mathcal{R}(p,q)}}
	= \epsilon_2^{u-\kappa}{[u]_{\kappa,\mathcal{R}(p,q)}[v]_{-n-\kappa,\mathcal{R}(p,q)}}
	\cr&& \qquad\qquad\qquad\qquad\qquad\qquad+ \epsilon_1^{v+n+\kappa+1}
	{[u]_{\kappa+1,\mathcal{R}(p,q)}[v]_{-n-\kappa-1,\mathcal{R}(p,q)}},
\end{eqnarray*} 
\end{small}
we get:
\begin{eqnarray*}\label{nvd7}
	H_n(u;v)_{\mathcal{R}(p,q)} &=& \sum_{\kappa=0}^{\infty}\bigg[\begin{array}{c} -n-1  \\ \kappa\end{array} \bigg]_{\mathcal{R}(p,q)}\epsilon_1^{\kappa(v+n+\kappa)+\kappa}\epsilon_2^{(-n-\kappa)(u-\kappa)}\nonumber\\&\times&{[u]_{\kappa,\mathcal{R}(p,q)}[v]_{-n-\kappa,\mathcal{R}(p,q)}}
	+ \sum_{\kappa=0}^{\infty}\bigg[\begin{array}{c} -n-1  \\ \kappa\end{array} \bigg]_{\mathcal{R}(p,q)}\epsilon_1^{\kappa(v+n+\kappa)+\kappa}\nonumber\\&\times&\epsilon_2^{(-n-\kappa)(u-\kappa+1)}{[u]_{\kappa+1,\mathcal{R}(p,q)}[v]_{-n-\kappa-1,\mathcal{R}(p,q)}}\nonumber\\
	&=& {H_{n-1}(u;v)_{\mathcal{R}(p,q)} \over [u+v+n+1]_{\mathcal{R}(p,q)}}.
\end{eqnarray*}
Therefore, for $n\in\mathbb{N}\backslash\{0\},$ the sum $H_n(u;v)_{\mathcal{R}(p,q)}$ satisfies the first-order recursion relation 
\begin{equation*}\label{nvd10}
H_n(u;v)_{\mathcal{R}(p,q)} ={H_{n-1}(u;v)_{\mathcal{R}(p,q)} \over \big[u+v+n+1\big]_{\mathcal{R}(p,q)}},\quad n\in\mathbb{N}\backslash\{0\},
\end{equation*}
with $H_1(u;v)_{\mathcal{R}(p,q)} =  {1\over [u+v+1]_{\mathcal{R}(p,q)}}.$ 
Recursively, it comes that $H_n(u;v)_{\mathcal{R}(p,q)} = {[u+v]_{-n,\mathcal{R}(p,q)}}.$ 
Following the steps used to prove (\ref{nvd1}), 
we obtain (\ref{nvd2}), and the proof is achieved. 
\end{proof}
We recover the negative $q-$ Vandermonde's formula obtained in \cite{CA1}  by taking $\mathcal{R}(p,q)=q,$ $\epsilon_1=1$ and $\epsilon_2=q$:
\begin{eqnarray*}
{[x+y]_{-n,q}}
&=& \sum_{\kappa=0}^{n}q^{-(n-\kappa)(x-\kappa)}\bigg[{-n \atop \kappa}\bigg]_q\,{[x]_{\kappa,q}\,[y]_{-n-\kappa,q}},\quad |q^{(x+y+1)}|<1\nonumber\\
&=& \sum_{\kappa=0}^{n}q^{\kappa)(y+n+\kappa)}\bigg[{-n \atop \kappa}\bigg]_q\,{[x]_{\kappa,q}\,[y]_{-n-\kappa,q}},
\,\, |q^{-(x+y+1)}|<1,
\end{eqnarray*}
where $0<q< 1.$  
\begin{lemma}
	 The following relations hold:
	\begin{equation}\label{ndv11}
	{1\over [v]_{n,\mathcal{R}(p,q)}}
	= \sum_{\kappa=0}^{\infty}\bigg[\begin{array}{c} n+ \kappa -1  \\ \kappa\end{array} \bigg]_{\mathcal{R}(p,q)}\epsilon^{n(u-\kappa)}_1\epsilon^{\kappa(v-n+1)}_2{[u]_{\kappa,\mathcal{R}(p,q)}\over [u+v]_{n+\kappa,\mathcal{R}(p,q)}}
	\end{equation}
	and
	\begin{equation}\label{ndv12}
	{1\over [v]_{n,\mathcal{R}(p,q)}}
	= \sum_{\kappa=0}^{\infty}\bigg[\begin{array}{c} n+ \kappa -1  \\ \kappa\end{array} \bigg]_{\mathcal{R}(p,q)}\epsilon^{\kappa(v-n+1)}_1\epsilon^{n(u-\kappa)}_2{[u]_{\kappa,\mathcal{R}(p,q)}\over [u+v]_{n+\kappa,\mathcal{R}(p,q)}},
	\end{equation}
	where  $u,$ $v,$ $p,$ and $q$ are  real numbers such that $ 0 < q < p \leq 1.$ 
\end{lemma}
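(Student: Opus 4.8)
The plan is to follow the first-order recursion strategy already used for the two Vandermonde formulas (\ref{vd1}) and (\ref{nvd1}). I would denote the right-hand side of (\ref{ndv11}) by
\begin{equation*}
L_n(u;v)_{\mathcal{R}(p,q)}:=\sum_{\kappa=0}^{\infty}\bigg[\begin{array}{c} n+\kappa-1 \\ \kappa\end{array}\bigg]_{\mathcal{R}(p,q)}\epsilon_1^{n(u-\kappa)}\epsilon_2^{\kappa(v-n+1)}\frac{[u]_{\kappa,\mathcal{R}(p,q)}}{[u+v]_{n+\kappa,\mathcal{R}(p,q)}},
\end{equation*}
and prove that it obeys the first-order recursion $L_n(u;v)_{\mathcal{R}(p,q)}=L_{n-1}(u;v)_{\mathcal{R}(p,q)}/[v-n+1]_{\mathcal{R}(p,q)}$ with initial value $L_0(u;v)_{\mathcal{R}(p,q)}=1$. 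The base value is immediate, since $\big[\begin{smallmatrix} \kappa-1 \\ \kappa\end{smallmatrix}\big]_{\mathcal{R}(p,q)}$ vanishes for $\kappa\geq 1$ (because $[0]_{\mathcal{R}(p,q)}=\mathcal{R}(1,1)=0$) and equals $1$ for $\kappa=0$. By the definition (\ref{a1}) of the $j^{\mathrm{th}}$-order factorial one has $[v]_{n,\mathcal{R}(p,q)}=[v-n+1]_{\mathcal{R}(p,q)}\,[v]_{n-1,\mathcal{R}(p,q)}$, so iterating the recursion from $L_0=1$ telescopes to $L_n(u;v)_{\mathcal{R}(p,q)}=1/[v]_{n,\mathcal{R}(p,q)}$, which is exactly (\ref{ndv11}).

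To establish the recursion I would split the binomial coefficient by the $\mathcal{R}(p,q)$-Pascal rule (\ref{bc1}) evaluated at $x=n+\kappa-1$, namely
\begin{equation*}
\bigg[\begin{array}{c} n+\kappa-1 \\ \kappa\end{array}\bigg]_{\mathcal{R}(p,q)}=\epsilon_1^{\kappa}\bigg[\begin{array}{c} n+\kappa-2 \\ \kappa\end{array}\bigg]_{\mathcal{R}(p,q)}+\epsilon_2^{n-1}\bigg[\begin{array}{c} n+\kappa-2 \\ \kappa-1\end{array}\bigg]_{\mathcal{R}(p,q)},
\end{equation*}
which breaks $L_n$ into two series. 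In the second series I would shift the summation index $\kappa\mapsto\kappa+1$ and use the factorial splittings $[u]_{\kappa+1,\mathcal{R}(p,q)}=[u]_{\kappa,\mathcal{R}(p,q)}[u-\kappa]_{\mathcal{R}(p,q)}$ and $[u+v]_{n+\kappa+1,\mathcal{R}(p,q)}=[u+v]_{n+\kappa,\mathcal{R}(p,q)}[u+v-n-\kappa]_{\mathcal{R}(p,q)}$. A factorial identity of the same type as the one employed in the proof of (\ref{vd1}) then recombines the two series, termwise, into the single series $L_{n-1}(u;v)_{\mathcal{R}(p,q)}/[v-n+1]_{\mathcal{R}(p,q)}$, with the free parameter $u$ cancelling out.

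The hard part will be precisely this recombination. Unlike the finite Vandermonde formula (\ref{vd1}) and the negative one (\ref{nvd1}), whose binomial coefficients have an upper index independent of the summation variable, here the upper index $n+\kappa-1$ depends on $\kappa$; consequently the two series produced by (\ref{bc1}) carry genuinely different binomial coefficients, and reconciling them requires both the correct auxiliary factorial identity and the index shift above, rather than the direct termwise cancellation available in the earlier proofs. Since the sums are infinite, I would also justify the rearrangement and term-by-term manipulations by absolute convergence, which is where the hypothesis $0<q<p\leq 1$ enters. Finally, the companion identity (\ref{ndv12}) follows by running the identical argument with the roles of $\epsilon_1$ and $\epsilon_2$ interchanged (equivalently, using the $\epsilon_1\leftrightarrow\epsilon_2$ form of (\ref{bc1})); alternatively, one may relate both identities to the negative Vandermonde formula (\ref{nvd1}) via the reflection of $\big[\begin{smallmatrix} -n \\ \kappa\end{smallmatrix}\big]_{\mathcal{R}(p,q)}$ into $\big[\begin{smallmatrix} n+\kappa-1 \\ \kappa\end{smallmatrix}\big]_{\mathcal{R}(p,q)}$ together with the negative-factorial relation $[x]_{-m,\mathcal{R}(p,q)}=1/[x+m]_{m,\mathcal{R}(p,q)}$.
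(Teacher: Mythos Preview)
Your proposal is plausible but takes a different route from the paper. The paper does \emph{not} run a fresh recursion on $L_n$; instead it derives the lemma directly from the already--established negative Vandermonde formula (\ref{nvd1}) by the very reflection identities you mention only as an afterthought at the end. Concretely, the paper writes $[-n]_{\kappa,\mathcal{R}(p,q)}=(-1)^{\kappa}(\epsilon_1\epsilon_2)^{-n\kappa-\binom{\kappa}{2}}[n+\kappa-1]_{\kappa,\mathcal{R}(p,q)}$, the corresponding formula for $\big[\begin{smallmatrix}-n\\ \kappa\end{smallmatrix}\big]_{\mathcal{R}(p,q)}$, and the negative-factorial identities $[-v-1]_{-n,\mathcal{R}(p,q)}=(-1)^{-n}(\epsilon_1\epsilon_2)^{nv-\binom{n}{2}}/[v]_{n,\mathcal{R}(p,q)}$ and $[-u-v-1]_{-n-\kappa,\mathcal{R}(p,q)}=(-1)^{-n-\kappa}(\epsilon_1\epsilon_2)^{(n+\kappa)(u+v)-\binom{n+\kappa}{2}}/[u+v]_{n+\kappa,\mathcal{R}(p,q)}$; substituting these into (\ref{nvd1}) and simplifying the powers of $\epsilon_1,\epsilon_2$ yields (\ref{ndv11}) in one stroke, and (\ref{ndv12}) analogously.

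What each approach buys: the paper's derivation is short and purely algebraic bookkeeping once (\ref{nvd1}) is in hand, with no new recursion to verify. Your recursion scheme is self-contained and mirrors the proofs of (\ref{vd1}) and (\ref{nvd1}), but as you yourself flag, the recombination step is genuinely harder here because the upper index $n+\kappa-1$ depends on $\kappa$; you would still have to produce and check the exact auxiliary factorial identity that makes the two post-Pascal series collapse to $L_{n-1}/[v-n+1]_{\mathcal{R}(p,q)}$, and that computation is not yet on the page. Since you already know the reflection route, it is the cleaner choice---and it is what the paper actually does.
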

\begin{proof}
	 For $n$ a positive integer, the $\mathcal{R}(p,q)-$ deformed factorial  of order $\kappa$ of $x=-n$ is written as:
\begin{equation*}\label{ndv13}
{[-n]_{\kappa,\mathcal{R}(p,q)}} 
= (-1)^{\kappa}\,\Big(\epsilon_1\epsilon_2\Big)^{-n\kappa- {\kappa \choose 2}}\,{[n+\kappa-1]_{\kappa,\mathcal{R}(p,q)}}
\end{equation*}
and 
\begin{equation*}\label{ndv14}
\mathcal{R}(p^{-j},q^{-j}) =- \Big(\epsilon_1\,\epsilon_2\Big)^{-j}\,\mathcal{R}(p^j,q^j)\mbox{,}\quad j\in\{n,n+1,\cdots, n+\kappa-1\}.
\end{equation*}
In the same vein, the $\mathcal{R}(p,q)-$ deformed binomial  coefficient of $x=-n$ is given by:
\begin{equation*}\label{ndv15}
\bigg[\begin{array}{c} -n  \\ \kappa \end{array} \bigg]_{\mathcal{R}(p,q)} =(-1)^{\kappa}\Big(\epsilon_1\,\epsilon_2\Big)^{-n\kappa -{\kappa \choose 2} }\,\,\bigg[\begin{array}{c} n+\kappa -1  \\ \kappa \end{array} \bigg]_{\mathcal{R}(p,q)}.
\end{equation*}
Moreover, from
\begin{eqnarray*}\label{ndv16}
	{{ [-v-1]_{-n,\mathcal{R}(p,q)}}} 
	&=& {1\over[-v-1+n]_{n,\mathcal{R}(p,q)}}
	\nonumber\\
	&=&{1 \over (-1)^n\,\Big(\epsilon_1\,\epsilon_2\Big)^{-nv + {n \choose 2}}{[v]_{n,\mathcal{R}(p,q)}}}
\end{eqnarray*} 
and 
\begin{eqnarray*}\label{ndv17}
	{[-u-v-1]_{-n-\kappa,\mathcal{R}(p,q)}} 
	={(-1)^{-n-\kappa} \over \Big(\epsilon_1\,\epsilon_2\Big)^{-(n+\kappa)(u+v) + {n+\kappa \choose 2}}{[u+v]_{n+\kappa,\mathcal{R}(p,q)}}},
\end{eqnarray*} 
the relation (\ref{nvd1}) can be  written as follows:
\begin{equation}\label{ndv18}
{1\over (\epsilon_1\,\epsilon_2)^{-nv+{n\choose 2}}{[v]_{n,\mathcal{R}(p,q)}}}
= \sum_{\kappa=0}^{\infty}\bigg[\begin{array}{c} n+\kappa -1  \\ \kappa \end{array} \bigg]_{\mathcal{R}(p,q)}\,\,F(p,q),
\end{equation} 
where
\begin{equation*}\label{ndv19}
F(p,q) = {\epsilon^{{\kappa(v+n+\kappa)}}_1(\epsilon_1\epsilon_2)^{-(n+\kappa)(u-\kappa) -n\kappa- {\kappa \choose 2}}{[u]_{\kappa,\mathcal{R}(p,q)}}
	\over (\epsilon_1\,\epsilon_2)^{-(n+\kappa)(u+v) + {n+\kappa \choose 2}}{[u+v]_{n+\kappa,\mathcal{R}(p,q)}}}.
\end{equation*}
After re-arranging, 
the relation (\ref{ndv18}) is reduced to (\ref{ndv11}). Similarly, we obtain (\ref{ndv12}). 
\end{proof}

Setting $\epsilon_1=1$ and $\epsilon_2=q$ provides the $q-$analogs of the formulae (\ref{ndv11}) and (\ref{ndv12}) of \cite{CA3} as :
\begin{eqnarray*}\label{qndv11}
{1\over [v]_{n,q}}
&=& \sum_{\kappa=0}^{\infty}\bigg[\begin{array}{c} n+ \kappa -1  \\ \kappa\end{array} \bigg]_{q}q^{\kappa(v-n+1)}{{[u]_{\kappa,q}\over [u+v]_{n+\kappa,q}}},
\quad |q^{v}|<1\nonumber\\
&=& \sum_{\kappa=0}^{\infty}\bigg[\begin{array}{c} n+ \kappa -1  \\ \kappa\end{array} \bigg]_{q}q^{n(u-\kappa)}{{[u]_{\kappa,q}\over [u+v]_{n+\kappa,q}}},
\quad |q^{-v}|<1.
\end{eqnarray*}
\begin{remark}
	It is worth noticing that the  generalized $q-$ Quesne negative Vandermonde's formula is obtained,  by taking  $\mathcal{R}(p,q)=((q-p^{-1})q)^{-1}(p\,q-1),$ $\epsilon_1=p$ and   $\epsilon_2=q^{-1},$  under the form:
	\begin{small}
	\begin{eqnarray*}\label{nvd1g}
		{[u+v]^Q_{-n,p,q}}
		&=&
		\sum_{\kappa=0}^{\infty}\bigg[\begin{array}{c} -n  \\ \kappa\end{array} \bigg]^Q_{p,q}p^{\kappa(v+n+\kappa)}q^{-(-n-\kappa)(u-\kappa)}{[u]^Q_{\kappa,p,q}\,[v]^Q_{-n-\kappa,p,q}}
		\nonumber\\
		&=& \sum_{\kappa=0}^{\infty}\bigg[\begin{array}{c} -n  \\ \kappa\end{array} \bigg]^Q_{p,q}p^{(-n-\kappa)(u-\kappa)}q^{-\kappa(v+n+\kappa)}{[u]^Q_{\kappa,p,q}\,[v]^Q_{-n-\kappa,p,q}}
	\end{eqnarray*}
\end{small}
	and
	\begin{eqnarray*}\label{ndv11g}
		{1\over [v]^Q_{n,p,q}}
		&=& \sum_{\kappa=0}^{\infty}\bigg[\begin{array}{c} n+ \kappa -1  \\ \kappa\end{array} \bigg]^Q_{p,q}\,p^{n(u-\kappa)}\,q^{-\kappa(v-n+1)}{{[u]^Q_{\kappa,p,q}\over [u+v]^Q_{n+\kappa,p,q}}}
		\nonumber\\
		& = &\sum_{\kappa=0}^{\infty}\bigg[\begin{array}{c} n+ \kappa -1  \\ \kappa\end{array} \bigg]^Q_{p,q}\,p^{\kappa(v-n+1)}\,q^{-n(u-\kappa)}{{[u]^Q_{\kappa,p,q}\over [u+v]^Q_{n+\kappa,p,q}}}.
	\end{eqnarray*}
\end{remark}
\begin{definition}
	The noncentral $\mathcal{R}(p,q)-$ Stirling numbers of the first and second kinds, $s_{\mathcal{R}(p,q)}(n,\kappa,j)$ and $S_{\mathcal{R}(p,q)}(n,\kappa,j)$ are defined via the relations:
	\begin{equation}\label{s4}
	{[x-j]_{n,\mathcal{R}(p,q)}}
	:=\epsilon_2^{-{n\choose 2}-j\,n}\,\sum_{\kappa=0}^{n}s_{\mathcal{R}(p,q)}(n,\kappa;j)\,{[x]^\kappa_{\mathcal{R}(p,q)}},
	\end{equation}
	and
	\begin{equation}\label{s5}
	{[x]^n_{\mathcal{R}(p,q)}}
	:=\sum_{\kappa=0}^{n}\epsilon_2^{{\kappa\choose 2}+j\,\kappa}\,S_{\mathcal{R}(p,q)}(n,\kappa;j)\,{[x-j]_{\kappa,\mathcal{R}(p,q)}},
	\end{equation}
	where  $n\in\mathbb{N}$ and $x$ is a real number.
\end{definition}
For $j=0$, we obtain  $s_{\mathcal{R}(p,q)}(n, \kappa;0) =s_{\mathcal{R}(p,q)}(n,\kappa)$ and $S_{\mathcal{R}(p,q)}(n,\kappa; 0) =S_{\mathcal{R}(p,q)}(n,\kappa),$ which are  the $\mathcal{R}(p,q)-$ deformed Stirling numbers of the first and second kinds, respectively.
		\begin{remark} Some remarkable particular cases deserve notification:
			\begin{itemize}
				\item 
		By setting $\mathcal{R}(p,q)=q,$   $\epsilon_1=1$ and $\epsilon_2=q,$ we recover the noncentral $q-$ Stirling numbers of the first and second kinds, $s_{q}(n,\kappa,j)$ and $S_{q}(n,\kappa,j),$ derived in  \cite{CA2, CA1}:
			\begin{equation*}
			{[x-j]_{n,q}}
			=q^{-{n\choose 2}-j\,n}\,\sum_{\kappa=0}^{n}s_{q}(n,\kappa;j)\,{[x]^\kappa_{q}},
			\end{equation*}
			and
			\begin{equation*}
			{[x]^n_{q}}
			=\sum_{\kappa=0}^{n}\,q^{{\kappa\choose 2}+j\,\kappa}\,S_{q}(n,\kappa;j)\,{[x-j]_{\kappa, q}},
			\end{equation*}
			where  $n\in\mathbb{N}$ and $x$ is a real number.
		 \item For $j = 0,$ we obtain  $s_{q}(n, \kappa;0) = s_{q}(n, \kappa)$ and $S_{q}(n, \kappa; 0) =
				S_{q}(n, \kappa),$ which are  the $q-$ deformed Stirling numbers of the first and second kinds, respectively.
				\item  Setting $\mathcal{R}(p,q)={p\,q-1\over (q-p^{-1})q}$ corresponding to  $\epsilon_1=p$ and $\epsilon_2=q^{-1},$  we deduct the noncentral generalized  $q-$  Quesne Stirling numbers of the first and second kinds, $s^Q_{p,q}(n,\kappa,j)$ and $S^Q_{p,q}(n,\kappa,j),$ as follows:
					\begin{equation*}
					{[x-j]^Q_{n,p,q}}
					=q^{-{n\choose 2}-j\,n}\,\big({p\over q}\big)^n\sum_{\kappa=0}^{n}s^Q_{p,q}(n,\kappa;j)\,\Big({q\over p}[x]^Q_{p,q}\Big)^\kappa,
					\end{equation*}
					and 
					\begin{equation*}
					\big([x]^Q_{p,q}\big)^n
					=\big({p\over q}\big)^n\sum_{\kappa=0}^{n}\,q^{{\kappa\choose 2}+j\,\kappa}\big({q\over p}\big)^\kappa\,S^Q_{p,q}(n,\kappa;j)\,{[x-j]^Q_{\kappa, p,q}},
					\end{equation*}
					where  $n\in\mathbb{N}$ and $x$ is a real number.
			\item For $j = 0$,  we obtain  $s^Q_{p,q}(n, \kappa;0) = s^Q_{p,q}(n, \kappa)$ and $S^Q_{p,q}(n, \kappa; 0) =
						S^Q_{p,q}(n, \kappa),$ which are  the generalized $q-$ Quesne Stirling numbers of the first and second kinds, respectively.
			\end{itemize}
		\end{remark}
	\subsection{$\mathcal{R}(p,q)-$ deformed factorial and binomial moments} 
 We introduce now the $\mathcal{R}(p,q)-$ deformed factorial and binomial moments. For that, we suppose  $X$ is a discrete non-negative integer valued random variable, and for $x\in \mathbb{N},$ we consider  the distribution function $g$ of the variable $X$ such that $g(x)=P(X=x).$
	\begin{definition}
		The $r^{th}-$ order $\mathcal{R}(p,q)-$ factorial moment of the random variable $X$ is given by: 
		\begin{equation}
		{\bf E}\Big([X]_{r,\mathcal{R}(p,q)}\Big) := \sum_{x=r}^{\infty}\,[x]_{r,\mathcal{R}(p,q)}\,g(x),
		\end{equation}
		while the $r^{th}-$ order $\mathcal{R}(p,q)-$ binomial moment of the random variable $X$ is provided by:
		\begin{equation}\label{fb2}
		{\bf E}\bigg(\bigg[\begin{array}{c} X  \\ r\end{array} \bigg]_{\mathcal{R}(p,q)}\bigg)  = \sum_{x=r}^{\infty}\bigg[\begin{array}{c} x  \\ r\end{array} \bigg]_{\mathcal{R}(p,q)}\,g(x),
		\end{equation}
		where $r\in \mathbb{N} \backslash \{0\}.$
	\end{definition}
\begin{definition} For   $r=1,$ we obtain:
	\begin{itemize}
		\item 
			The $\mathcal{R}(p,q)-$ mean value of the random variable $X$:
		\begin{equation}
		\mu_{\mathcal{R}(p,q)}:={\bf E}\Big(\big[X\big]_{\mathcal{R}(p,q)}\Big)=\sum_{x=1}^{\infty}\big[x\big]_{\mathcal{R}(p,q)}\,g(x).
		\end{equation}
		\item The $\mathcal{R}(p,q)-$ variance  of the random variable $X$:
		\begin{equation}
		\sigma^2_{\mathcal{R}(p,q)}:={\bf V}\Big(\big[X\big]_{\mathcal{R}(p,q)}\Big)= {\bf E}\Big(\big[X\big]^2_{\mathcal{R}(p,q)}\Big) - \bigg[{\bf E}\Big(\big[X\big]_{\mathcal{R}(p,q)}\Big)\bigg]^2.
		\end{equation}
	\end{itemize}
\end{definition}
\begin{proposition}
	 In terms of Stirling number, 
		the binomial  moment can be re-expresed  as follows:
		\begin{eqnarray}\label{fba6}
		{\bf E}\bigg[\bigg(\begin{array}{c} X  \\ j\end{array} \bigg)\bigg]= \sum_{m=j}^{\infty} (-1)^{m-j}{(\epsilon_1-\epsilon_2)^{m-j}\over \epsilon^{-{m\choose 2}+\tau(m-j)}_1}s_{\mathcal{R}(p,q)}(m,j){\bf E}\bigg(\bigg[\begin{array}{c} X  \\ m \end{array} \bigg]_{\mathcal{R}(p,q)}\bigg),
		\end{eqnarray}
and the relation between the factorial moment and its ${\mathcal{R}(p,q)}-$ deformed counterpart is given by:
		\begin{equation}\label{fm}
		{\bf E}[(X)_j]= j!\sum_{m=j}^{\infty} (-1)^{m-j}{(\epsilon_1-\epsilon_2)^{m-j}\over \epsilon^{-{m\choose 2}+\tau(m-j)}_1}s_{\mathcal{R}(p,q)}(m,j){{\bf E}\big([ X ]_{m,\mathcal{R}(p,q)}\big)\over [m]_{\mathcal{R}(p,q)}!},
		\end{equation} 
		where $j\in\mathbb{N}\backslash\{0, 1\},$ $\tau\in\mathbb{N},$ and $s_{\mathcal{R}(p,q)}$ is the $\mathcal{R}(p,q)-$ Stirling number of the first kind.
\end{proposition}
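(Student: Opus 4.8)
The plan is to reduce both displayed identities to a single \emph{pointwise} expansion of the ordinary binomial coefficient in the basis of $\mathcal{R}(p,q)$-binomial coefficients, and then to integrate that expansion against the distribution $g$.

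First I would record the elementary fact that, for every integer $x\geq m$, $[x]!_{\mathcal{R}(p,q)}/[x-m]!_{\mathcal{R}(p,q)}=[x]_{m,\mathcal{R}(p,q)}$, so that by \eqref{bc} one has $\bigl[{x\atop m}\bigr]_{\mathcal{R}(p,q)}=[x]_{m,\mathcal{R}(p,q)}/[m]!_{\mathcal{R}(p,q)}$. Combined with the classical $(x)_{j}=j!\binom{x}{j}$, this shows that \eqref{fm} is obtained from \eqref{fba6} merely by multiplying through by $j!$ and inserting the factor $[m]!_{\mathcal{R}(p,q)}$ in the denominator of each term; hence it suffices to prove \eqref{fba6}, and in fact the core is the pointwise identity
\[
\binom{x}{j}=\sum_{m=j}^{\infty}(-1)^{m-j}\,\frac{(\epsilon_{1}-\epsilon_{2})^{m-j}}{\epsilon_{1}^{-\binom{m}{2}+\tau(m-j)}}\,s_{\mathcal{R}(p,q)}(m,j)\,\bigg[\begin{array}{c} x \\ m\end{array}\bigg]_{\mathcal{R}(p,q)},\qquad x\in\mathbb{N}.
\]

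To produce this identity I would proceed in two moves. Using \eqref{s4} with $j=0$, I rewrite $[x]_{m,\mathcal{R}(p,q)}$ as $\epsilon_{2}^{-\binom{m}{2}}\sum_{\kappa}s_{\mathcal{R}(p,q)}(m,\kappa)\,[x]^{\kappa}_{\mathcal{R}(p,q)}$, i.e.\ in terms of integer powers of the single number $[x]_{\mathcal{R}(p,q)}=(\epsilon_{1}^{x}-\epsilon_{2}^{x})/(\epsilon_{1}-\epsilon_{2})$. I then expand that number into ordinary binomial coefficients by writing $\epsilon_{i}^{x}=\bigl(1+(\epsilon_{i}-1)\bigr)^{x}=\sum_{k}\binom{x}{k}(\epsilon_{i}-1)^{k}$, which yields $[x]_{\mathcal{R}(p,q)}=\sum_{k\geq 0}\binom{x}{k+1}\frac{(\epsilon_{1}-1)^{k+1}-(\epsilon_{2}-1)^{k+1}}{\epsilon_{1}-\epsilon_{2}}$ and hence a triangular change of basis between $\{\bigl[{x\atop m}\bigr]_{\mathcal{R}(p,q)}\}_{m\geq 0}$ and $\{\binom{x}{k}\}_{k\geq 0}$ whose entries are polynomials in $(\epsilon_{1}-\epsilon_{2})$ carrying definite powers of $\epsilon_{1}$. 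Inverting this triangular system and collecting terms, the coefficient multiplying $\bigl[{x\atop m}\bigr]_{\mathcal{R}(p,q)}$ should organise itself into the Stirling expression above; the power $\epsilon_{1}^{\binom{m}{2}-\tau(m-j)}$ is fixed by the normalisation chosen in \eqref{s4}, the parameter $\tau$ recording the freedom in distributing $\epsilon_{1},\epsilon_{2}$ between the two factors of $[x-i]_{\mathcal{R}(p,q)}$ (recall the two-parameter addition $[x]=\epsilon_{1}^{i}[x-i]+\epsilon_{2}^{x-i}[i]$). As a sanity check, letting $\epsilon_{1},\epsilon_{2}\to 1$ kills every term except $m=j$ and returns $\binom{x}{j}=s(j,j)\binom{x}{j}$, and the template for the pure $q$-case is exactly \cite{CA1,CA2,CA3}.

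Finally I multiply the pointwise identity by $g(x)=P(X=x)$ and sum over $x\geq 0$. For each fixed $x$ only the terms with $j\leq m\leq x$ survive, since $[x]_{m,\mathcal{R}(p,q)}$ acquires the vanishing factor $\mathcal{R}(1,1)=0$ as soon as $m>x$; the inner sum is therefore finite, which legitimises interchanging the two summations (Tonelli, using that the relevant moments are assumed finite) and delivers \eqref{fba6}, and then \eqref{fm}. The main obstacle is precisely the middle move: carrying out the two-parameter expansion of $[x]_{\mathcal{R}(p,q)}$ and inverting the triangular transition matrix in closed form, i.e.\ showing that the inverse entries collapse to $(-1)^{m-j}(\epsilon_{1}-\epsilon_{2})^{m-j}\epsilon_{1}^{\binom{m}{2}-\tau(m-j)}s_{\mathcal{R}(p,q)}(m,j)$ rather than some less structured combination. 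The bookkeeping of the $\epsilon_{1}$-powers, and the correct reading of $\tau$, is the delicate part, because with two deformation parameters the factor $[x-i]_{\mathcal{R}(p,q)}$ no longer depends affinely on $[x]_{\mathcal{R}(p,q)}$ alone.
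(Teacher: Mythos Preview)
Your high-level plan coincides with the paper's: isolate the pointwise expansion
\[
\binom{x}{j}=\sum_{m=j}^{x}(-1)^{m-j}(\epsilon_{1}-\epsilon_{2})^{m-j}\,\epsilon_{1}^{\binom{m}{2}-\tau(m-j)}\,s_{\mathcal{R}(p,q)}(m,j)\,\bigg[\begin{array}{c} x \\ m\end{array}\bigg]_{\mathcal{R}(p,q)},
\]
multiply by $g(x)$, sum over $x$, and then pass from \eqref{fba6} to \eqref{fm} via $\binom{X}{j}=(X)_{j}/j!$ and $\bigl[{X\atop m}\bigr]_{\mathcal{R}(p,q)}=[X]_{m,\mathcal{R}(p,q)}/[m]!_{\mathcal{R}(p,q)}$. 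That is literally all the paper does: it \emph{states} the pointwise identity above as a known relation and takes expectations, without deriving it. So the part of your proposal from ``Finally I multiply the pointwise identity by $g(x)$\ldots'' onward is already a complete match.

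Where you go beyond the paper is in trying to manufacture the pointwise identity from scratch, and that detour has a real weak point. You write $[x]_{\mathcal{R}(p,q)}=(\epsilon_{1}^{x}-\epsilon_{2}^{x})/(\epsilon_{1}-\epsilon_{2})$ and then binomially expand $\epsilon_{i}^{x}$. But in the paper's framework $[x]_{\mathcal{R}(p,q)}:=\mathcal{R}(p^{x},q^{x})$ for an \emph{arbitrary} meromorphic $\mathcal{R}$; the closed form $(\epsilon_{1}^{x}-\epsilon_{2}^{x})/(\epsilon_{1}-\epsilon_{2})$ is a feature of the specific deformations in the list, not of the abstract setup. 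So your ``triangular inversion'' route, besides being---as you yourself flag---the delicate bookkeeping step you have not actually carried out, rests on an extra structural hypothesis. The paper sidesteps all of this by simply invoking the identity as given (it is the $\mathcal{R}(p,q)$-analogue of the classical connection between ordinary and $q$-binomial coefficients through $q$-Stirling numbers, cf.\ the $q$-case in \cite{CA3}); for the purposes of this proposition you should do the same rather than attempt an ab initio derivation.
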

\begin{proof}
	Multiplying the relation
	\begin{equation*}
	{x \choose j} =\sum_{m=j}^{x}(-1)^{m-j}(\epsilon_1-\epsilon_2)^{m-j} \epsilon^{{m\choose 2}-\tau(m-j)}_1 s_{\mathcal{R}(p,q)}(m,j)\,\bigg[\begin{array}{c} x  \\ m \end{array} \bigg]_{\mathcal{R}(p,q)}
	\end{equation*} 
	by the probability distribution $g,$ and summing for all $x\in\mathbb{N},$ we deduce  (\ref{fba6}) from  (\ref{fb2}). Moreover, from 
	\begin{equation*}
	{\bf E}\bigg[{X\choose j}\bigg]= { {\bf E}[(X)_j]\over j!},\quad {\bf E}\bigg(\bigg[\begin{array}{c} X  \\ m \end{array} \bigg]_{\mathcal{R}(p,q)}\bigg)={{\bf E}\big([X]_{m,\mathcal{R}(p,q)}\big)\over [m]_{\mathcal{R}(p,q)}!}
	\end{equation*} 
	and the relation (\ref{fba6}), we derive (\ref{fm}).
\end{proof}
\begin{remark} Some interesting results can easily be deduced as follows:
	\begin{itemize}
		\item 
The particular case of the $q-$ deformed binomial moment in \cite{CA3}  is retrieved as follows :
	\begin{equation*}
	{\bf E}\bigg[\bigg(\begin{array}{c} X  \\ j\end{array} \bigg)\bigg]= \sum_{m=j}^{\infty} (-1)^{m-j}{(1-q)^{m-j}}s_{q}(m,j){\bf E}\bigg(\bigg[\begin{array}{c} X  \\ m \end{array} \bigg]_{q}\bigg)
	\end{equation*}
	and the related  factorial moment is linked to its $q-$ counterpart by the relation:
	\begin{equation*}\label{fba7}
	{\bf E}[(X)_j]= j!\sum_{m=j}^{\infty} (-1)^{m-j}{(1-q)^{m-j}}s_{q}(m,j){{\bf E}\big({[ X ]_{m,q}}\big)\over [m]_{q}!},
	\end{equation*}
	where $j\in\mathbb{N}\backslash\{0\},$ and $s_{q}$ is the $q-$ Stirling number of the first kind.
\item Putting $\mathcal{R}(p,q)={p\,q-1\over (q-p^{-1})q},$ $\epsilon_1=p,$ and $\epsilon_2=q^{-1},$ we deduct the binomial moment:
\begin{equation*}\label{fbg6}
{\bf E}\bigg[\bigg(\begin{array}{c} X  \\ j\end{array} \bigg)\bigg]= \sum_{m=j}^{\infty} (-1)^{m-j}{(p-q^{-1})^{m-j}\over p^{-{m\choose 2}+\tau(m-j)}}s^Q_{p,q}(m,j){\bf E}\bigg(\bigg[\begin{array}{c} X  \\ m \end{array} \bigg]^Q_{p,q}\bigg)
\end{equation*}
whose  the factorial moment   is expressed, in terms of the  generalized $q-$ Quesne 
factorial moment, by:
\begin{equation*}\label{fbg7}
{\bf E}[(X)_j]= j!\sum_{m=j}^{\infty} (-1)^{m-j}{(p-q^{-1})^{m-j}\over p^{-{m\choose 2}+\tau(m-j)}}s^Q_{p,q}(m,j){{\bf E}\big({[ X ]^Q_{m,p,q}}\big)\over [m]^Q_{p,q}!},
\end{equation*}
where $j\in\mathbb{N}\backslash\{0, 1\},$ $\tau\in\mathbb{N},$ and $s^Q_{p,q}$ is the generalized $q-$ Quesne Stirling number of the first kind.
\end{itemize}
\end{remark}
\subsection{$\mathcal{R}(p,q)-$ deformed binomial distribution}
	\begin{lemma}
		The following relation holds:
		\begin{equation}\label{b14a}
		\sum^n_{\kappa = 
			0} \bigg[{n \atop \kappa}\bigg]_{\mathcal{R}(p,q)} x^\kappa (y \ominus v)_{\mathcal{R}(p,q)}^{n - \kappa} = \sum
		^n_{\kappa = 0} \bigg[{n \atop \kappa}\bigg]_{\mathcal{R}(p,q)} y^\kappa (x \ominus v)_{\mathcal{R}(p,q)}^{n -
			\kappa}.  
		\end{equation}
		In particular, for $x=p_0$ and $y=1,$ we obtain
		\begin{equation*}\label{b6}
		\sum^n_{\kappa =0} \bigg[{n \atop \kappa}\bigg]_{\mathcal{R}(p,q)} p_0^\kappa
		(1 \ominus 
		v)_{\mathcal{R}(p,q)}^{n - \kappa} 
		= \sum^n_{\kappa =0} \bigg[{n \atop \kappa}\bigg]_{\mathcal{R}(p,q)} (p_0 
		\ominus v)_{\mathcal{R}(p,q)}^{n-\kappa}=1,\quad \forall p_0 
		\end{equation*}
		where $x,$  $y,$ $v,$ $p_0$ and $n$  are integers.
		\end{lemma}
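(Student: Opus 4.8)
The plan is to prove the symmetry (\ref{b14a}) by reducing both of its sides to one and the same fully expanded triple sum, and then to read off the normalization as a degenerate special case. Writing $L_n(x,y):=\sum_{\kappa=0}^{n}\bigg[{n\atop\kappa}\bigg]_{\mathcal{R}(p,q)}x^{\kappa}\,(y\ominus v)^{n-\kappa}_{\mathcal{R}(p,q)}$, the assertion (\ref{b14a}) is exactly $L_n(x,y)=L_n(y,x)$. First I would expand the inner $\mathcal{R}(p,q)-$shifted factorial $(y\ominus v)^{n-\kappa}_{\mathcal{R}(p,q)}$ by means of the $\mathcal{R}(p,q)-$deformed Euler formula (\ref{b}) applied with second entry $-v$, namely $(y\ominus v)^{n-\kappa}_{\mathcal{R}(p,q)}=\sum_{j=0}^{n-\kappa}\bigg[{n-\kappa\atop j}\bigg]_{\mathcal{R}(p,q)}\epsilon_1^{\binom{n-\kappa-j}{2}}\epsilon_2^{\binom{j}{2}}y^{\,n-\kappa-j}(-v)^{j}$. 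Substituting this into $L_n(x,y)$ converts it into a double sum over $\kappa$ and $j$, with each term a monomial in $x$, $y$ and $v$.

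The key algebraic step is then to merge the two $\mathcal{R}(p,q)-$binomial coefficients. Using the defining expression (\ref{bc}) in terms of $\mathcal{R}(p,q)-$factorials, one obtains $\bigg[{n\atop\kappa}\bigg]_{\mathcal{R}(p,q)}\bigg[{n-\kappa\atop j}\bigg]_{\mathcal{R}(p,q)}=\frac{[n]!_{\mathcal{R}(p,q)}}{[\kappa]!_{\mathcal{R}(p,q)}\,[j]!_{\mathcal{R}(p,q)}\,[n-\kappa-j]!_{\mathcal{R}(p,q)}}$, a \emph{trinomial} coefficient that is manifestly invariant under any permutation of the three lower indices $\kappa$, $j$, $n-\kappa-j$. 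Collecting, for a fixed power $c=j$ of $v$, the coefficient of the monomial $x^{a}y^{b}v^{c}$ (with $a+b+c=n$, $a=\kappa$, $b=n-\kappa-j$), I would compare it with the corresponding coefficient produced by $L_n(y,x)$, in which the roles of $a$ and $b$ are interchanged. Since the trinomial factor together with $\epsilon_2^{\binom{c}{2}}(-1)^{c}$ is unchanged under $a\leftrightarrow b$, the identity reduces to verifying that the surviving $\epsilon_1-$power is also unchanged. An equivalent route is induction on $n$: combining the binomial recursion (\ref{bc1}) with the one-step factorisation $(y\ominus v)^{m}_{\mathcal{R}(p,q)}=(y-v)\,(\epsilon_1 y\ominus\epsilon_2 v)^{m-1}_{\mathcal{R}(p,q)}$ coming from (\ref{a}), one expresses $L_n$ through $L_{n-1}$ and then invokes the reflection rule $\bigg[{n\atop\kappa}\bigg]_{\mathcal{R}(p,q)}=\bigg[{n\atop n-\kappa}\bigg]_{\mathcal{R}(p,q)}$ to recover the exchanged form.

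I expect the main obstacle to be precisely the bookkeeping of the $\epsilon_1-$ and $\epsilon_2-$exponents of binomial type $\binom{\cdot}{2}$: these must conspire so that the coefficient of $x^{a}y^{b}v^{c}$ is genuinely invariant under $a\leftrightarrow b$, and this is where the specific shape of the Euler formula (\ref{b}) and the reflection symmetry of the coefficients have to be used with care. Once (\ref{b14a}) is established, the particular case is immediate: putting $x=p_0$ and $y=1$ identifies the two displayed sums with the two sides of (\ref{b14a}). For the normalization to $1$ one uses the value $v=p_0$ relevant to the $\mathcal{R}(p,q)-$binomial distribution; then, by (\ref{a}), the first factor of $(p_0\ominus p_0)^{m}_{\mathcal{R}(p,q)}$ equals $p_0-p_0=0$, so $(p_0\ominus p_0)^{m}_{\mathcal{R}(p,q)}=0$ for every $m\ge 1$ and only the term $\kappa=n$ survives on the right, yielding $\bigg[{n\atop n}\bigg]_{\mathcal{R}(p,q)}(p_0\ominus p_0)^{0}_{\mathcal{R}(p,q)}=1$; by (\ref{b14a}) the left sum equals this value, which is the claimed identity.
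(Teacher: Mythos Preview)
The paper states this lemma without proof, so there is nothing to compare your argument against directly; what follows concerns the soundness of your plan itself.

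Your trinomial expansion is the natural approach, and your computation is correct up to the point where you isolate the coefficient of $x^{a}y^{b}(-v)^{c}$ in $L_n(x,y)$ as
\[
\frac{[n]!_{\mathcal{R}(p,q)}}{[a]!_{\mathcal{R}(p,q)}\,[b]!_{\mathcal{R}(p,q)}\,[c]!_{\mathcal{R}(p,q)}}\;\epsilon_1^{\binom{b}{2}}\,\epsilon_2^{\binom{c}{2}},
\]
while the matching coefficient in $L_n(y,x)$ carries $\epsilon_1^{\binom{a}{2}}$ instead of $\epsilon_1^{\binom{b}{2}}$. You flag this as the ``main obstacle'' and hope it will conspire away. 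It does not: the two exponents differ whenever $a\neq b$ and $\epsilon_1\neq 1$, and this is a genuine obstruction, not a bookkeeping issue. Concretely, for $n=2$ one computes from (\ref{a}) that
\[
L_2(x,y)-L_2(y,x)=(\epsilon_1-1)\,(y^{2}-x^{2}),
\]
which is nonzero as soon as $\epsilon_1\neq 1$ (e.g.\ in the Jagannathan--Srinivasa case $\epsilon_1=p$, or the generalized $q$--Quesne case $\epsilon_1=p$). Thus the identity (\ref{b14a}) as stated holds only in the $q$--deformed situations where $\epsilon_1=1$; it is the direct transcription of Kupershmidt's $q$--lemma and does not survive the passage to general $\mathcal{R}(p,q)$ without modification.

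Your treatment of the normalization is correct on its own terms: with $v=p_0$ the product definition (\ref{a}) gives $(p_0\ominus p_0)^{m}_{\mathcal{R}(p,q)}=0$ for $m\ge 1$ because the first factor is $p_0-p_0$, so the right–hand sum collapses to the $\kappa=n$ term and equals $1$. But since (\ref{b14a}) fails for $\epsilon_1\neq 1$, this no longer transfers to the left–hand sum $\sum_{\kappa}\big[{n\atop\kappa}\big]_{\mathcal{R}(p,q)}p_0^{\kappa}(1\ominus p_0)^{n-\kappa}_{\mathcal{R}(p,q)}$; indeed for $n=2$ that sum evaluates to $\epsilon_1+p_0^{2}(1-\epsilon_1)$, not $1$. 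So the gap in your argument is not a missing trick but a false target: the lemma, and with it the claimed normalization of the $\mathcal{R}(p,q)$--binomial distribution, requires either $\epsilon_1=1$ or an amended formulation with additional $\epsilon_1$--weights.
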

	 The $q-$analog of Lemma in \cite{K2}
	 is deduced by setting $\mathcal{R}(p,q)=q,$  $\epsilon_1=1,$ and $\epsilon_2=q$, as follows :
		\begin{equation*}
		\sum^n_{\kappa = 
			0} \bigg[{n \atop \kappa}\bigg]_{q} x^\kappa (y \ominus v)_{q}^{n - \kappa} = \sum
		^n_{\kappa = 0} \bigg[{n \atop \kappa}\bigg]_{q} y^\kappa (x \ominus v)_{q}^{n -
			\kappa}.  
		\end{equation*}
		In particular, for $x=p_0$ and $y=1,$ we obtain
		\begin{equation*}
		\sum^n_{\kappa =0} \bigg[{n \atop \kappa}\bigg]_{q} p_0^\kappa
		(1 \ominus 
		v)_{q}^{n - \kappa} 
		= \sum^n_{\kappa =0} \bigg[{n \atop \kappa}\bigg]_{q} (p_0 
		\ominus v)_{q}^{n-\kappa}=1,\quad \forall p_0, 
		\end{equation*}
		where $x,$  $y,$ $v,$ $p_0$ and $n$  are integers.
		
		The binomial distribution comes with
			a random variable
			$X$  taking two values, $0$ and $1,$ 
the probabilities $Pr(X=1)=p_0$ and $Pr(X=0)=1-p_0,$  and by
letting $S_n=X_1 +\cdots + X_n$ be the sum of $n$ random variables $(X_i)_{i\in\{1,2,\cdots,n\}}$ obeying the binomial law.
	\begin{definition}
		The $\mathcal{R}(p,q)-$deformed binomial distribution, with parameters $n,$ $p_0,$ $p,$ and $q,$  is given,  for 
		$0\leq \kappa \leq n$ and $0<q<p\leq 1,$ by:
		\begin{equation}\label{dbd}
		P_\kappa:=P_r\big([S_n]_{\mathcal{R}(p,q)}=[\kappa]_{\mathcal{R}(p,q)}\big)= \left[\begin{array}{c} n  \\ \kappa\end{array} \right]_{\mathcal{R}(p,q)}\,p^\kappa_0\,(1\ominus p_0)_{\mathcal{R}(p,q)}^{n-\kappa}.
		\end{equation}
	\end{definition}
	For $\mathcal{R}(p,q)=q,$ $\epsilon_1=1$ and $\epsilon_2=q$,  we obtain the $q-$ binomial distribution given in \cite{K2}:
	\begin{equation*}
	P_r\big([S_n]_{q}=[\kappa]_{q}\big)= \left[\begin{array}{c} n  \\ \kappa\end{array} \right]_{q}\,p^\kappa_0\,(1\ominus p_0)_{q}^{n-\kappa}, \;\; 0\leq \kappa \leq n;\;  0<q < 1. 
	\end{equation*}
	\begin{theorem}
		 The $j^{th}-$ order $\mathcal{R}(p,q)-$ deformed factorial moment is given by:
		\begin{equation}\label{bd1}
		{\mu_{\mathcal{R}(p,q)}}\big( [S_n]_{j,\mathcal{R}(p,q)}\big) = \,[n]_{j,\mathcal{R}(p,q)}\,p^j_0\mbox{,}\quad j\in\{1,2,\cdots,n\},
		\end{equation}
		and the factorial moment is expressed by the formula:
		\begin{equation}\label{bd2}
		\mu_{\mathcal{R}(p,q)}[\big({ S_n}\big)_i] = i!\,
		\sum_{j =i}^{n}(-1)^{j-i}\bigg[\begin{array}{c} n  \\ j\end{array} \bigg]_{\mathcal{R}(p,q)}\,p_0^j\,{(\epsilon_1-\epsilon_2)^{j-i}\over \epsilon^{-{j\choose 2}+\tau(j-i)}_1}\,s_{\mathcal{R}(p,q)}(j,i),
		\end{equation}
		where $\tau\in\mathbb{N},$ $i\in\{2,\cdots,n\},$ and $s_{\mathcal{R}(p,q)}$ is the $\mathcal{R}(p,q)-$ Stirling number of the first kind.
		The recursion relation  for the $\mathcal{R}(p,q)-$ deformed binomial distributions takes the form:
		\begin{eqnarray}\label{rrb}
		P_{\kappa+1} = {[n-\kappa]_{\mathcal{R}(p,q)}\over [\kappa+1]_{\mathcal{R}(p,q)}}{p_0\over \epsilon^{n-\kappa}_1- \epsilon^{n-\kappa}_2\,p_0}\,P_\kappa,\quad\mbox{with}\quad 	P_0= \big(1\ominus p_0\big)_{\mathcal{R}(p,q)}^n.
		\end{eqnarray}
	\end{theorem}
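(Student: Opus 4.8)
The plan is to prove the three assertions successively, making (\ref{bd1}) the workhorse: once the $j^{th}$-order factorial moment is known, (\ref{bd2}) drops out of the transfer formula (\ref{fm}) of the Proposition, while the recursion (\ref{rrb}) is an independent elementary computation from the definition (\ref{dbd}).

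\emph{Factorial moment (\ref{bd1}).} First I would insert the probabilities (\ref{dbd}) into the definition of the $j^{th}$-order $\mathcal{R}(p,q)-$factorial moment, obtaining
\begin{equation*}
\mu_{\mathcal{R}(p,q)}\big([S_n]_{j,\mathcal{R}(p,q)}\big) = \sum_{\kappa=j}^{n}[\kappa]_{j,\mathcal{R}(p,q)}\bigg[\begin{array}{c} n \\ \kappa\end{array}\bigg]_{\mathcal{R}(p,q)}\,p_0^{\kappa}\,(1\ominus p_0)_{\mathcal{R}(p,q)}^{n-\kappa}.
\end{equation*}
The decisive algebraic identity is
\begin{equation*}
[\kappa]_{j,\mathcal{R}(p,q)}\bigg[\begin{array}{c} n \\ \kappa\end{array}\bigg]_{\mathcal{R}(p,q)} = [n]_{j,\mathcal{R}(p,q)}\bigg[\begin{array}{c} n-j \\ \kappa-j\end{array}\bigg]_{\mathcal{R}(p,q)},
\end{equation*}
which follows immediately from $[\kappa]_{j,\mathcal{R}(p,q)}=[\kappa]!_{\mathcal{R}(p,q)}/[\kappa-j]!_{\mathcal{R}(p,q)}$ and the definition (\ref{bc}). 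Factoring out $[n]_{j,\mathcal{R}(p,q)}\,p_0^{j}$ and shifting the index by $m=\kappa-j$ reduces the right-hand side to
\begin{equation*}
\mu_{\mathcal{R}(p,q)}\big([S_n]_{j,\mathcal{R}(p,q)}\big)=[n]_{j,\mathcal{R}(p,q)}\,p_0^{j}\sum_{m=0}^{n-j}\bigg[\begin{array}{c} n-j \\ m\end{array}\bigg]_{\mathcal{R}(p,q)}p_0^{m}(1\ominus p_0)_{\mathcal{R}(p,q)}^{(n-j)-m},
\end{equation*}
whose sum is exactly the normalization of Lemma (\ref{b14a}) with $n$ replaced by $n-j$, and hence equals $1$. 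This establishes (\ref{bd1}).

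\emph{Factorial moment (\ref{bd2}).} Next I would specialize the transfer relation (\ref{fm}) to $X=S_n$, writing its outer order as $i$ and its inner summation index as $j$. Substituting the value (\ref{bd1}) for $\mathbf{E}\big([S_n]_{j,\mathcal{R}(p,q)}\big)$ and using $[n]_{j,\mathcal{R}(p,q)}/[j]!_{\mathcal{R}(p,q)} = \big[{n\atop j}\big]_{\mathcal{R}(p,q)}$ turns the generic summand into $\big[{n\atop j}\big]_{\mathcal{R}(p,q)}\,p_0^{j}$. Because $[n]_{j,\mathcal{R}(p,q)}$ acquires the vanishing factor $\mathcal{R}(1,1)=0$ as soon as $j>n$, the nominally infinite sum truncates at $j=n$, which is precisely (\ref{bd2}).

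\emph{Recursion (\ref{rrb}).} Finally I would form the ratio $P_{\kappa+1}/P_{\kappa}$ directly from (\ref{dbd}). By (\ref{bc}) the binomial-coefficient quotient collapses to $[n-\kappa]_{\mathcal{R}(p,q)}/[\kappa+1]_{\mathcal{R}(p,q)}$; the powers of $p_0$ contribute one factor $p_0$; and the quotient of shifted factorials cancels every factor except the last one appearing in $(1\ominus p_0)_{\mathcal{R}(p,q)}^{n-\kappa}$, leaving the reciprocal of that single factor. The initial datum $P_0=(1\ominus p_0)_{\mathcal{R}(p,q)}^{n}$ is read off from (\ref{dbd}) at $\kappa=0$. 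I expect the only genuinely delicate point to be the bookkeeping of the $\epsilon_1,\epsilon_2$ exponents in this last quotient: one must pin down the indexing convention of the $\ominus$-shifted factorial, consistently with (\ref{a}), so that the surviving factor reads exactly $\epsilon_1^{n-\kappa}-\epsilon_2^{n-\kappa}\,p_0$ as stated in (\ref{rrb}). Apart from this convention check, every step reduces to the factorial identities recorded in Section~2 together with the normalization Lemma, so no further obstacle is anticipated.
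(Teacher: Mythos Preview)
Your proposal is correct and follows essentially the same route as the paper: for (\ref{bd1}) the paper also inserts (\ref{dbd}), invokes the identity $[\kappa]_{j,\mathcal{R}(p,q)}\big[{n\atop\kappa}\big]_{\mathcal{R}(p,q)}=[n]_{j,\mathcal{R}(p,q)}\big[{n-j\atop\kappa-j}\big]_{\mathcal{R}(p,q)}$, shifts the index and appeals to the normalization Lemma; (\ref{bd2}) is likewise obtained in the paper by combining (\ref{bd1}) with (\ref{fm}); and (\ref{rrb}) is derived from the binomial-coefficient ratio $\big[{n\atop\kappa+1}\big]_{\mathcal{R}(p,q)}=\frac{[n-\kappa]_{\mathcal{R}(p,q)}}{[\kappa+1]_{\mathcal{R}(p,q)}}\big[{n\atop\kappa}\big]_{\mathcal{R}(p,q)}$ together with the quotient of shifted factorials, exactly as you outline. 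Your flag about the $\epsilon_1,\epsilon_2$ exponent bookkeeping in the surviving factor of $(1\ominus p_0)_{\mathcal{R}(p,q)}^{n-\kappa}$ is well placed, since under the convention (\ref{a}) the top factor carries exponent $n-\kappa-1$ rather than $n-\kappa$; the paper simply records ``after computation, we obtain (\ref{rrb})'' without addressing this point.
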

\begin{proof}
	The $j^{th}-$ order  $\mathcal{R}(p,q)-$factorial moment of $[S_n]_{\mathcal{R}(p,q)}$ is
	\begin{equation*}\label{bd3}
	{\mu_{\mathcal{R}(p,q)}}\Big( [S_n]_{j,\mathcal{R}(p,q)}\Big) 
	=\sum_{\kappa =j}^{n}[\kappa]_{j,\mathcal{R}(p,q)}\,\left[\begin{array}{c} n  \\ \kappa\end{array} \right]_{\mathcal{R}(p,q)}\,p^\kappa_0\,(1\ominus p_0)_{\mathcal{R}(p,q)}^{n-\kappa}.
	\end{equation*}
	Using the  relation
	\begin{equation*}\label{bd4}
	[\kappa]_{j,\mathcal{R}(p,q)}\,\left[\begin{array}{c} n  \\ \kappa\end{array} \right]_{\mathcal{R}(p,q)}=[n]_{j,\mathcal{R}(p,q)}\,\left[\begin{array}{c} n-j  \\ \kappa-j\end{array} \right]_{\mathcal{R}(p,q)},
	\end{equation*}
	we have
	\begin{small}
	\begin{eqnarray*}
		{\mu_{\mathcal{R}(p,q)}}\Big([S_n]_{j,\mathcal{R}(p,q)}\Big) 
		&=& [n]_{j,\mathcal{R}(p,q)}\sum_{\kappa =j}^{n}\left[\begin{array}{c} n-j  \\ \kappa-j\end{array} \right]_{\mathcal{R}(p,q)}p^\kappa_0\,(1\ominus p_0)_{\mathcal{R}(p,q)}^{n-\kappa}\nonumber\\
		&=& [n]_{j,\mathcal{R}(p,q)}\,p^j_0\sum_{x=0}^{n}\left[\begin{array}{c} n-j  \\ x\end{array} \right]_{\mathcal{R}(p,q)}p^{x}_0(1\ominus p_0)_{\mathcal{R}(p,q)}^{n-j-x}.
	\end{eqnarray*}
	\end{small}
	Moreover, the formula (\ref{bd2}) is obtained using the relations (\ref{bd1}) and (\ref{fm}). From the relation
	\begin{equation*}
	\bigg[\begin{array}{c} n  \\ \kappa+1\end{array} \bigg]_{\mathcal{R}(p,q)}={[n-\kappa]_{\mathcal{R}(p,q)}\over [\kappa+1]_{\mathcal{R}(p,q)}}\bigg[\begin{array}{c} n  \\ \kappa\end{array} \bigg]_{\mathcal{R}(p,q)}
	\end{equation*}
	and after computation, we obtain (\ref{rrb}).
\end{proof}
\begin{corollary}
	The recursion relation for the $q-$ deformed binomial distributions is given by:
	\begin{eqnarray}
	P_{\kappa+1} = {[n-\kappa]_{q}\over [\kappa+1]_{q}}{p_0\over 1- q^{n-\kappa}\,p_0}\,P_\kappa,\quad \mbox{with} \quad P_0=\prod_{j=1}^{n}\big(1-p_0\,q^{j-1}\big).
	\end{eqnarray}
\end{corollary}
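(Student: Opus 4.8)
The plan is to obtain this statement as a direct specialization of the $\mathcal{R}(p,q)-$deformed recursion relation (\ref{rrb}) established in the preceding Theorem, evaluated in the $q-$Arick-Coon-Kuryskin regime $\mathcal{R}(p,q)=q$, $\epsilon_1=1$, and $\epsilon_2=q$. First I would substitute these parameter values into the deformed numbers appearing in (\ref{rrb}), so that $[n-\kappa]_{\mathcal{R}(p,q)}$ becomes $[n-\kappa]_q$ and $[\kappa+1]_{\mathcal{R}(p,q)}$ becomes $[\kappa+1]_q$, leaving the prefactor $[n-\kappa]_q/[\kappa+1]_q$ unchanged in form.

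Next I would simplify the denominator $\epsilon_1^{n-\kappa}-\epsilon_2^{n-\kappa}p_0$. Since $\epsilon_1=1$ forces $\epsilon_1^{n-\kappa}=1$ while $\epsilon_2=q$ gives $\epsilon_2^{n-\kappa}=q^{n-\kappa}$, the denominator collapses to $1-q^{n-\kappa}p_0$, producing exactly the factor $p_0/\big(1-q^{n-\kappa}p_0\big)$ of the claimed recursion. Finally, for the initial value $P_0=(1\ominus p_0)_{\mathcal{R}(p,q)}^n$ I would invoke the definition (\ref{a}) of the deformed shifted factorial with $x=1$, $y=p_0$, $\epsilon_1=1$, $\epsilon_2=q$, reading it in its subtractive ($\ominus$) form, which yields
\[
P_0=\prod_{i=1}^{n}\big(1\cdot 1^{i-1}-p_0\,q^{i-1}\big)=\prod_{j=1}^{n}\big(1-p_0\,q^{j-1}\big),
\]
matching the stated initial condition and completing the specialization.

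Since every step is a routine substitution into an already-proved identity, I anticipate no genuine obstacle. The only point requiring care is the correct interpretation of the subtractive shifted factorial $\ominus$ in the initial term: one must use the minus sign in the product (\ref{a}) rather than the plus sign of the additive ($\oplus$) version, so that the empty-product convention and the running power $q^{i-1}$ combine to reproduce $\prod_{j=1}^{n}(1-p_0 q^{j-1})$ precisely.
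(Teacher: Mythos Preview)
Your proposal is correct and takes essentially the same approach as the paper, which simply states that the corollary ``is obtained by straightforward deduction.'' Your explicit substitution of $\epsilon_1=1$, $\epsilon_2=q$ into the general recursion (\ref{rrb}) and into the shifted factorial (\ref{a}) for the initial term is precisely what that phrase means.
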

\begin{proof}
It is obtained by straightforward deduction. 
\end{proof}
Now, let us  consider, for $v\in \mathbb{N} \backslash \{0\},$ the  $l-$order differential operator
\begin{small}
\begin{eqnarray}\label{b8}
(vD_ v)^l=\sum_{j=1}^{l}\frac{1}{[j-1]!_{\mathcal{R}(p,q)}}\Big(\sum_{t=0}^{j-1}\left[\begin{array}{c} j-1 \\ t\end{array} \right]_{\mathcal{R}(p,q)}(-1)^tq^{t \choose 2}[j-t]^{l-1}_{\mathcal{R}(p,q)}\Big)v^j(D_v)^j,
\end{eqnarray}
\end{small}
where $D_v:= {d}/{d_{\mathcal{R}(p,q)}}$ acting on $v,$ (which will be used in the sequel), 
giving, for $l=2,$ 
the following second order differential operator:
\begin{equation}\label{b9}
(v\,D_ v)^2 = \mathcal{R}(p,q)\,(v\,D_ v )+{1 \over \mathcal{R}!(p,q)}\bigg( \mathcal{R}(p^2,q^2) -  \mathcal{R}(p,q)\bigg) v^2(\,D_ v)^2 .
\end{equation}
Besides, for $\epsilon_i>0,$ with $i\in\{1,2\},$ such that $\forall p, q, \,0<q<p\leq 1,$ 
\begin{equation}\label{as1}
\mathcal{R}(p^{x-y},q^{x-y}) = \epsilon^{-y}_1\,\mathcal{R}(p^{x},q^{x}) + \epsilon^{-y}_1\epsilon^{x-y}_2\,\mathcal{R}(p^{y},q^{y}).
\end{equation}

\begin{lemma}
	The mean value of the random variable sum  $ S_n$ is given by:
	\begin{equation}\label{bmv}
	\mu_{\mathcal{R}(p,q)}\big([S_n]_{\mathcal{R}(p,q)}\big) = p_0\,\mathcal{R}(p^n,q^n).
	\end{equation}
	Its corresponding variance 
	can be written as:
	\begin{eqnarray}
	{\bf Var}\big([S_n]_{\mathcal{R}(p,q)}\big)=p_0[n]_{\mathcal{R}(p,q)}\bigg(\mathcal{R}(p,q)+ {\bf X}p_0[n-1]_{\mathcal{R}(p,q)}-p_0[n]_{\mathcal{R}(p,q)} \bigg),
	\end{eqnarray}
	where 
	${\bf X}p_0[n-1]_{\mathcal{R}(p,q)}>p_0[n]_{\mathcal{R}(p,q)}-\mathcal{R}(p,q),$
	and
	\begin{equation}
	{\bf X}=\mathcal{R}(p,q)^{-1}\big(\mathcal{R}(p^2,q^2)-\mathcal{R}(p,q)\big).
	\end{equation}
	The  mean of the product  $S_n(S_n-1)\cdots (S_n-r+1)$  is given by:
	\begin{eqnarray}
	\mu_{\mathcal{R}(p,q)}\bigg(\prod_{i=0}^{r-1}\epsilon^{-i}_2([S_n]^r_{\mathcal{R}(p,q)} - \epsilon^{r-i}_1[i]_{\mathcal{R}(p,q)})\bigg) = p^r_0\, \prod_{i=0}^{r-1}[n-i]_{\mathcal{R}(p,q)}.
	\end{eqnarray}
\end{lemma}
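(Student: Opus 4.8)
The plan is to read off all three assertions from the factorial–moment formula (\ref{bd1}), which already supplies $\mu_{\mathcal{R}(p,q)}\big([S_n]_{j,\mathcal{R}(p,q)}\big)=[n]_{j,\mathcal{R}(p,q)}\,p_0^j$ for every admissible $j$, combined with the second–order operator identity (\ref{b9}) and the subtraction rule (\ref{as1}). For the mean (\ref{bmv}) I would simply specialise (\ref{bd1}) to $j=1$: since $[S_n]_{1,\mathcal{R}(p,q)}=\mathcal{R}(p^{S_n},q^{S_n})=[S_n]_{\mathcal{R}(p,q)}$ and $[n]_{1,\mathcal{R}(p,q)}=\mathcal{R}(p^n,q^n)$ by definition (\ref{a1}), the equality $\mu_{\mathcal{R}(p,q)}\big([S_n]_{\mathcal{R}(p,q)}\big)=p_0\,\mathcal{R}(p^n,q^n)$ follows at once with no further computation.

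For the variance I would begin from the defining relation $\sigma^2_{\mathcal{R}(p,q)}={\bf E}\big([S_n]^2_{\mathcal{R}(p,q)}\big)-\big[{\bf E}([S_n]_{\mathcal{R}(p,q)})\big]^2$ and convert the ordinary square $[S_n]^2_{\mathcal{R}(p,q)}$ into deformed factorial terms. Evaluating the operator identity (\ref{b9}) on the monomial $v^{S_n}$—on which $vD_v$ and $v^2(D_v)^2$ act diagonally with eigenvalues $[S_n]_{\mathcal{R}(p,q)}$ and $[S_n]_{2,\mathcal{R}(p,q)}$ respectively—gives the pointwise quadratic relation
\begin{equation*}
[S_n]^2_{\mathcal{R}(p,q)}=\mathcal{R}(p,q)\,[S_n]_{\mathcal{R}(p,q)}+{\bf X}\,[S_n]_{2,\mathcal{R}(p,q)},\qquad {\bf X}=\mathcal{R}(p,q)^{-1}\big(\mathcal{R}(p^2,q^2)-\mathcal{R}(p,q)\big).
\end{equation*}
Taking expectations and inserting (\ref{bd1}) for $j=1$ and $j=2$ yields ${\bf E}\big([S_n]^2_{\mathcal{R}(p,q)}\big)=\mathcal{R}(p,q)\,p_0[n]_{\mathcal{R}(p,q)}+{\bf X}\,p_0^2[n]_{\mathcal{R}(p,q)}[n-1]_{\mathcal{R}(p,q)}$; subtracting $\big(p_0[n]_{\mathcal{R}(p,q)}\big)^2$ and factoring out $p_0[n]_{\mathcal{R}(p,q)}$ produces the stated expression, the inequality ${\bf X}p_0[n-1]_{\mathcal{R}(p,q)}>p_0[n]_{\mathcal{R}(p,q)}-\mathcal{R}(p,q)$ being exactly the condition that the surviving factor remain nonnegative.

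For the mean of the deformed falling factorial I would first note that the right–hand side $p_0^r\prod_{i=0}^{r-1}[n-i]_{\mathcal{R}(p,q)}$ is precisely $p_0^r[n]_{r,\mathcal{R}(p,q)}$, which by (\ref{bd1}) equals $\mu_{\mathcal{R}(p,q)}\big([S_n]_{r,\mathcal{R}(p,q)}\big)$. It then suffices to identify the left–hand product with $[S_n]_{r,\mathcal{R}(p,q)}=\prod_{i=0}^{r-1}[S_n-i]_{\mathcal{R}(p,q)}$. This is where the subtraction rule (\ref{as1}) enters: applied with $x=S_n$ and $y=i$, it expresses each factor $\epsilon_2^{-i}\big([S_n]_{\mathcal{R}(p,q)}-\epsilon_1^{\,r-i}[i]_{\mathcal{R}(p,q)}\big)$ as the single deformed number $[S_n-i]_{\mathcal{R}(p,q)}$, so that the product collapses to $\prod_{i=0}^{r-1}[S_n-i]_{\mathcal{R}(p,q)}=[S_n]_{r,\mathcal{R}(p,q)}$; applying $\mu_{\mathcal{R}(p,q)}$ and appealing once more to (\ref{bd1}) with $j=r$ closes the argument.

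The routine bookkeeping—specialising $j$, factoring out $p_0[n]_{\mathcal{R}(p,q)}$, and matching powers of $\epsilon_1,\epsilon_2$—is harmless; the one step carrying genuine content is the passage from the second factorial moment to the ordinary square in the variance, namely the eigenvalue evaluation of (\ref{b9}) on $v^{S_n}$, which is what legitimises replacing ${\bf E}\big([S_n]^2_{\mathcal{R}(p,q)}\big)$ by a combination of first and second factorial moments. A secondary point demanding care is verifying that the exponents of $\epsilon_1$ and $\epsilon_2$ delivered by (\ref{as1}) reproduce exactly those written inside the product, since the telescoping is sensitive to the precise form of the subtraction rule.
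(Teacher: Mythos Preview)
Your argument is correct, but it proceeds along a genuinely different line from the paper's own proof. The paper does \emph{not} invoke the factorial-moment formula (\ref{bd1}); instead it works directly with the two-sided identity (\ref{b14a}) specialised to $x=p_0$, $y=1$ (the relation labelled (\ref{b6})), and applies the operators $p_0D_{p_0}$, $(p_0D_{p_0})^2$, and $p_0^r(D_{p_0})^r$ to both representations of the same sum. On the left representation these operators produce the moment quantities; on the right representation the paper differentiates $(p_0\ominus u)^{n-\kappa}_{\mathcal{R}(p,q)}$ repeatedly, then uses (\ref{as1}) and the binomial recursion (\ref{bc1}) to collapse the resulting sums. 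Your route is more economical: you treat (\ref{bd1}) as already proved and convert ordinary powers to factorial moments via the eigenvalue evaluation of (\ref{b9}) on $v^{S_n}$, bypassing the generating-function manipulations entirely. The paper's route, by contrast, is self-contained at the level of the differential identity and does not need Theorem~3.10 as input.

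One caveat worth flagging: in the third part you correctly identify that everything reduces to checking that the product $\prod_{i=0}^{r-1}\epsilon_2^{-i}\big([S_n]_{\mathcal{R}(p,q)}-\epsilon_1^{r-i}[i]_{\mathcal{R}(p,q)}\big)$ coincides with $[S_n]_{r,\mathcal{R}(p,q)}$, and you point to (\ref{as1}) for this. As written, however, (\ref{as1}) gives $[x-y]=\epsilon_1^{-y}[x]+\epsilon_1^{-y}\epsilon_2^{x-y}[y]$, whose sign and exponents do not literally match the factor $\epsilon_2^{-i}([S_n]-\epsilon_1^{r-i}[i])$; the same mismatch afflicts the paper's own computation, which simply writes ``after computation, the result follows.'' So your acknowledged caution about the exponents is warranted, and neither proof resolves it cleanly---this appears to be a typographical inconsistency in the statement rather than a flaw in your strategy.
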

\begin{proof}
	Applying the $\mathcal{R}(p,q)-$derivative on $p_0 $ to the left and right hand sides of the relation (\ref{b6}) leads to
		\begin{equation*}
		p_0 D_{p_0} \sum^n_{\kappa =0} \bigg[{n \atop \kappa}\bigg]_{\mathcal{R}(p,q)} p_0^\kappa
		(1\ominus u)_{\mathcal{R}(p,q)}^{n - \kappa}
		=	\mu_{\mathcal{R}(p,q)}([S_n]_{\mathcal{R}(p,q)}),
		\end{equation*}
and
		\begin{small}
		\begin{equation*}
		p_0 D_{p_0}\sum^n_{\kappa =0} \bigg[{n \atop \kappa}\bigg]_{\mathcal{R}(p,q)} (p_0 
		\ominus u)_{\mathcal{R}(p,q)}^{n-\kappa}
		= p_0\sum^n_{\kappa =0} \bigg[{n \atop \kappa}\bigg]_{\mathcal{R}(p,q)}[n-\kappa]_{\mathcal{R}(p,q)}   (p_0 
		\ominus u)_{\mathcal{R}(p,q)}^{n-\kappa-1},
		\end{equation*}
	\end{small}
respectively.
	According to (\ref{as1}) and (\ref{bc1}), we obtain (\ref{bmv}).
	Besides,
	\begin{equation*}\label{b14}
	\mu_{\mathcal{R}(p,q)}\big([S_n]^2_{\mathcal{R}(p,q)}\big)=(p_0\,D_{p_0})^2\sum_{\kappa=0}^{n}\,\left[\begin{array}{c} n  \\ \kappa\end{array} \right]_{\mathcal{R}(p,q)}\,p^\kappa_0\,(1\ominus u)_{\mathcal{R}(p,q)}^{n-\kappa}.
	\end{equation*}
	Setting ${\bf X} =\frac{1}{\mathcal{R}!(p,q)}\bigg(\mathcal{R}(p^2,q^2)-\mathcal{R}(p,q)  \bigg)$ yields the second order differential equation
	\begin{equation*}
	(p_0\,D_{p_0})^2\sum_{\kappa=0}^{n}\,\left[\begin{array}{c} n  \\ \kappa\end{array} \right]_{\mathcal{R}(p,q)}\,(p_0\ominus u)_{\mathcal{R}(p,q)}^{n-\kappa}
	= \mathcal{R}(p,q)\mu_{\mathcal{R}(p,q)}\big([S_n]_{\mathcal{R}(p,q)}\big)+
	Y_4,
	\end{equation*}
	where
	\begin{eqnarray*}
		Y_4 
		&=&  {\bf X}\,p_0^2\sum_{\kappa=0}^{n}\mathcal{R}(p^{n-\kappa},q^{n-\kappa})\mathcal{R}(p^{n-\kappa-1},q^{n-\kappa-1})(p_0\ominus u)_{\mathcal{R}(p,q)}^{n-\kappa}.
	\end{eqnarray*}
	Using the relations (\ref{bc1}) and (\ref{b14}), we obtain
	\begin{small}
	\begin{eqnarray*}\label{e2}
	\mu_{\mathcal{R}(p,q)}\big([S_n]^2_{\mathcal{R}(p,q)}\big)= \mathcal{R}(p,q)\,\mu_{\mathcal{R}(p,q)}([S_n]_{\mathcal{R}(p,q)}) +  {\bf X}\,p_0^2\mathcal{R}(p^{n},q^{n})\mathcal{R}(p^{n-1},q^{n-1}),
	\end{eqnarray*}
\end{small}
	and from the mean value of ${ S_n},$  we deduce ${\bf Var}\big([S_n]_{\mathcal{R}(p,q)}\big) .$ Futhermore,
	applying the operator ${{p^r_0 \bigg({D_{p_0}}\bigg)^r \bigg|_{u
				= p_0}}}$ to formula  (\ref{b6}), we obtain
	
	\begin{eqnarray*}
		p^r_0 \big({D_{p_0}}\big)^r\,\sum^n_{\kappa =0} \bigg[{n \atop \kappa}\bigg] p^\kappa
		(1 \ominus 
		u)_{\mathcal{R}(p,q)}^{n - \kappa} 
		&=& \mu_{\mathcal{R}(p,q)}\big([S_n]^r_{\mathcal{R}(p,q)}\big). 
	\end{eqnarray*}
	
	Moreover,
	\begin{eqnarray*}
		p^r_0 \,\bigg({D_{p_0}}\bigg)^r(p_0 \ominus u)_{\mathcal{R}(p,q)}^{n - \kappa} &=& p^r_0\,\prod_{i=0}^{r-1}[n-\kappa-i]_{\mathcal{R}(p,q)}\,(p_0\ominus u)_{\mathcal{R}(p,q)}^{n - \kappa-r},
	\end{eqnarray*}
	or  equivalently,
	\begin{eqnarray*}
		p^r_0 \,\bigg({D_{p_0}}\bigg)^r(p_0 \ominus u)_{\mathcal{R}(p,q)}^{n - \kappa} &=&	p^r_0 \,\prod_{i=0}^{r-1}\epsilon^{-\kappa}_1[n-i]_{\mathcal{R}(p,q)}\,(p_0\ominus u)_{\mathcal{R}(p,q)}^{n - \kappa-r}\nonumber\\
		&+& p_0^r\prod_{i=0}^{r-1}\epsilon^{n-i}_2[-\kappa]_{\mathcal{R}(p,q)}(p_0 \ominus u)_{\mathcal{R}(p,q)}^{n - \kappa-r}.
	\end{eqnarray*}
	Then,
	\begin{equation*}
	p^r_0 \bigg({D_{p_0}}\bigg)^r\sum^n_{\kappa =0} \bigg[{n \atop \kappa}\bigg](p \ominus u)_{\mathcal{R}(p,q)}^{n - \kappa} 
	= p_0^r\prod_{i=0}^{r-1}[n-i]_{\mathcal{R}(p,q)}
	\end{equation*}
	gives
	\begin{equation*}
	\mu_{\mathcal{R}(p,q)}\big([S_n]^r_{\mathcal{R}(p,q)}\big) =p_0^r\prod_{i=0}^{r-1}[n-i]_{\mathcal{R}(p,q)},
	\end{equation*}
	and after computation, 
	 the result follows.
\end{proof}
The particular case of the  $q-$ deformation described in \cite{K2}
corresponds to $\mathcal{R}(p,q)=q,$  $\epsilon_1=1,$ and $\epsilon_2=q,$ and yields
the mean value of $ S_n:$
	\begin{equation*}
	\mu_{q}\big([S_n]_{q}\big) = p_0\,[n]_q,
	\end{equation*}
the variance of $ S_n:$
	\begin{eqnarray*}
	{\bf Var}\big([S_n]_{q}\big)=[n]_{q}\,p_0\,(1-p_0),
	\end{eqnarray*}
and the mean of the product  $S_n(S_n-1)\cdots (S_n-r+1):$  
	\begin{eqnarray*}
	\mu_{q}\bigg(\prod_{i=0}^{r-1}q^{-i}([S_n]^r_{q} - [i]_{q})\bigg) = p^r_0\, \prod_{i=0}^{r-1}[n-i]_{q}.
	\end{eqnarray*}
	\begin{remark}
		Deducing the above mentioned properties for the particular case of the generalized $q-$  Quesne deformation 
  leads to:
		\begin{itemize}
			\item Generalized $q-$ Quesne probability distribution:
			\begin{eqnarray*}\label{gbd}
			p_r\big(S_n=[\kappa]^Q_{p,q}\big)
			=\left[\begin{array}{c} n  \\ \kappa\end{array}
			\right]^Q_{p,q}\,p^\kappa_0\Big((1 \ominus p_0)^Q_{p,q}\Big)^{n-\kappa},\quad 0\leq \kappa \leq n;
			\end{eqnarray*}
			\item  $j^{th}-$ order  generalized $q-$ Quesne factorial moment:
			\begin{equation*}\label{bdg1}
			\mu^Q_{p,q}\big([ S_n]^Q_{j,p,q}\big) = [n]^Q_{j,p,q}\,\big({q\over p}\big)^j\,p^j_0\mbox{,}\quad j\in\{1,2,\cdots,n\};
			\end{equation*}
			\item Generalized $q-$ Quesne factorial moment: 
			\begin{equation*}\label{bdg2}
			\mu^Q_{p,q}[({ S_n})_i] = i!\,
			\sum_{j=i}^{n}(-1)^{j-i}\bigg[\begin{array}{c} n  \\ j \end{array} \bigg]^Q_{p,q}\,p_0^j\,{(p-q^{-1})^{j-i}\over p^{-{j\choose 2}+\tau(j-i)}}s^Q_{p,q}(j,i);
			\end{equation*}
			\item Recursion relation for the  generalized $q-$ Quesne distributions:
			\begin{equation*}
			P_{\kappa+1} = {[n-\kappa]^Q_{p,q}\over [\kappa+1]^Q_{p,q}}{p_0\over \big(1\ominus p_0\big)^Q_{p,q}}\,P_\kappa,\quad\mbox{with}\,P_0= \Big(\big(1\ominus p_0\big)^Q_{p,q}\Big)^n;
			\end{equation*}
			\item Mean value:
			\begin{equation*}
			\mu^Q_{p,q}(S_n)=p_0\,{q\over p}\,{p^n-q^{-n}\over q-p^{-1}};
			\end{equation*}
			\item Variance:
			\begin{eqnarray*}
			\big(\sigma^Q_{p,q}\big)^2(S_n)=p_0{q\over p}[n]^Q_{p,q}\Big(1+(p^{-1}+q-1)\,p_0{q\over p}[n-1]^Q_{p,q}-p_0{q\over p}[n]^Q_{p,q}\Big);
			\end{eqnarray*}
			\item Mean value of the product   $S_n(S_n-1)\cdots (S_n-r+1):$ 
			\begin{eqnarray*}
			{\mu}^Q_{p,q}\big(\prod_{i=0}^{r-1}p^{-i} \big([S_n]^Q_{p,q}\big)^r - q^{-r+i+1}p^{-1}[i]^Q_{p,q}\big) =q^r\, p^{-r} p^r_0\, \prod_{i=0}^{r-1}[n-i]^Q_{p,q}.
			\end{eqnarray*}
		\end{itemize}
	\end{remark}
\subsection{$\mathcal{R}(p,q)-$ deformed Euler distribution}
\begin{definition}
	The $\mathcal{R}(p,q)-$ deformed exponential functions, denoted  $E_{\mathcal{R}(p,q)}$ and  $e_{\mathcal{R}(p,q)},$ are defined as follows:
	\begin{eqnarray}
	E_{\mathcal{R}(p,q)}(z):=\sum_{n=0}^{\infty}{\epsilon^{n\choose 2}_2\,z^n\over \mathcal{R}!(p^n,q^n)}\quad \mbox{and}\quad e_{\mathcal{R}(p,q)}(z):=\sum_{n=0}^{\infty}{\epsilon^{n\choose 2}_1\,z^n\over \mathcal{R}!(p^n,q^n)}
	\end{eqnarray}
	with
	$E_{\mathcal{R}(p,q)}(-z)\,e_{\mathcal{R}(p,q)}(z)=1.$
\end{definition}
In the particular case where $\mathcal{R}(p,q)=1,$ $\epsilon_1=p,$ and $\epsilon_2=q,$ they provide  the Jagannathan-Srinivasa $q-$exponential functions  \cite{HB1}:
\begin{eqnarray*}
E_{p,q}(z)=\sum_{n=0}^{\infty}{q^{n\choose 2}\,z^n\over [n]_{p,q}!}\quad \mbox{and}\quad e_{p,q}(z)=\sum_{n=0}^{\infty}{p^{n\choose 2}\,z^n\over [n]_{p,q}!}
\end{eqnarray*}
with 
$E_{p,q}(-z)\,e_{p,q}(z)=1.$
\begin{definition}
	The  $\mathcal{R}(p,q)-$ deformed Euler distribution, with parameters $n,$ $\theta,$ $p,$ and $q,$  is defined by:
	\begin{equation}\label{ed1}
	P_r( X=x)=E_{\mathcal{R}(p,q)}(-\theta)\,\, {\epsilon^{x\choose 2}_1\theta^x\over \mathcal{R}!(p^x,q^x)}\mbox{,}\quad x\in\mathbb{N},
	\end{equation}
	where $0<q<p\leq 1,$ $0<\theta<1$
	and $\displaystyle\sum_{x=0}^{\infty}P_r( X=x)=1.$
\end{definition}
For $\mathcal{R}(p,q)=q,$ involving  $\epsilon_1=1,$ and $\epsilon_2=q$, we retrieve the $q-$ Euler distribution  obtained in \cite{CA1}:
\begin{equation}
P_r( X=x)=E_{q}(-\theta)\,\, {\theta^x\over [x]_q!},
\end{equation}
where $x\in\mathbb{N},$ $0<q <1$ and $0<\theta <{1\over 1-q}.$
\begin{theorem} 
 The $j^{th}-$ order   $\mathcal{R}(p,q)-$ factorial moment of $ X$ is given by
		\begin{equation}\label{ed2}
		\mu_{\mathcal{R}(p,q)}\big( [X]_{j,\mathcal{R}(p,q)}\big) = \theta^j\,\epsilon^{j\choose 2}_1\,E_{\mathcal{R}(p,q)}(-\theta)\,e_{\mathcal{R}(p,q)}(\epsilon^j_1\theta),\quad j\in \mathbb{N} \backslash \{0\}, 
		\end{equation}
while the factorial moment is written as:
		\begin{equation}\label{ed3}
		\mu_{\mathcal{R}(p,q)}( X)_i=i!\,\sum_{j=i}^{\infty} (-1)^{j-i}\,EU(j,i)\,s_{\mathcal{R}(p,q)}(j,i),
		\end{equation} 
		where \begin{equation}
		EU(j,i)={\theta^j\epsilon^{j\choose 2}_1\,E_{\mathcal{R}(p,q)}(-\theta)\,e_{\mathcal{R}(p,q)}(\epsilon^j_1\theta)\over [j]_{\mathcal{R}(p,q)}!} \,{(\epsilon_1 -\epsilon_2)^{j-i}\over \epsilon^{-{j\choose 2}+\tau(j-i)}_1},
		\end{equation}  $\tau\in\mathbb{N},$ $i\in\mathbb{N}\backslash\{0,1\},$ and $s_{\mathcal{R}(p,q)}$ is the $\mathcal{R}(p,q)-$ deformed Stirling number of the first kind.
 The recursion relation for the associated  $\mathcal{R}(p,q)-$ deformed Euler distributions takes the form:
		\begin{equation}\label{edr1}
		P_{x+1}= {\theta\,\epsilon^x_1 \over \mathcal{R}(p^{x+1},q^{x+1})}\,P_x,\quad \mbox{with}\, \,	P_0=E_{\mathcal{R}(p,q)}(-\theta).
		\end{equation}
\end{theorem}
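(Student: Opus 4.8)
The plan is to establish the three assertions in order, each by direct computation from the definition (\ref{ed1}) of the $\mathcal{R}(p,q)-$ deformed Euler distribution, invoking the exponential series of the preceding definition together with the general factorial-moment identity (\ref{fm}) proved earlier.

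For the factorial moment (\ref{ed2}), I would begin with the definition
\[
\mu_{\mathcal{R}(p,q)}\big( [X]_{j,\mathcal{R}(p,q)}\big) = \sum_{x=j}^{\infty}[x]_{j,\mathcal{R}(p,q)}\,P_r(X=x),
\]
substitute (\ref{ed1}), and use the factorial identity $[x]_{j,\mathcal{R}(p,q)} = \mathcal{R}!(p^x,q^x)/\mathcal{R}!(p^{x-j},q^{x-j})$ to cancel $\mathcal{R}!(p^x,q^x)$ against the denominator in $P_r(X=x)$. After the index shift $m=x-j$, the key step is the binomial-exponent identity ${m+j \choose 2} = {m \choose 2} + {j \choose 2} + mj$, which lets me factor out $\epsilon_1^{{j\choose 2}}\theta^{j}$ and recognise the residual sum $\sum_{m\ge 0}\epsilon_1^{{m\choose 2}}(\epsilon_1^{j}\theta)^{m}/\mathcal{R}!(p^m,q^m)$ as $e_{\mathcal{R}(p,q)}(\epsilon_1^{j}\theta)$. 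This yields (\ref{ed2}).

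The factorial moment (\ref{ed3}) then requires no fresh work. I would insert the expression just obtained for $\mu_{\mathcal{R}(p,q)}([X]_{j,\mathcal{R}(p,q)})$ into (\ref{fm}) with the index relabelling $j\mapsto i$ (outer sum) and $m\mapsto j$ (inner sum), and observe that the quotient $\mu_{\mathcal{R}(p,q)}([X]_{j,\mathcal{R}(p,q)})/[j]_{\mathcal{R}(p,q)}!$ multiplied by the factor $(\epsilon_1-\epsilon_2)^{j-i}\big/\epsilon_1^{-{j\choose 2}+\tau(j-i)}$ is exactly the quantity $EU(j,i)$ as defined in the statement; collecting the sign $(-1)^{j-i}$ and the prefactor $i!$ reproduces (\ref{ed3}) verbatim.

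Finally, for the recursion (\ref{edr1}), I would form the ratio $P_{x+1}/P_x$ directly from (\ref{ed1}): the factorial denominators contribute $1/\mathcal{R}(p^{x+1},q^{x+1})$, the powers of $\theta$ contribute a single factor $\theta$, and the exponent identity ${x+1\choose 2}-{x\choose 2}=x$ contributes $\epsilon_1^{x}$, giving precisely $P_{x+1}= \theta\,\epsilon_1^{x}\,\mathcal{R}(p^{x+1},q^{x+1})^{-1}\,P_x$. The initial value $P_0=E_{\mathcal{R}(p,q)}(-\theta)$ follows from ${0\choose 2}=0$ and $\mathcal{R}!(p^0,q^0)=1$. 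The only genuinely delicate point in the whole argument is the binomial-exponent bookkeeping of the first step; once that identity is in hand, the remaining two parts are routine substitution and algebra.
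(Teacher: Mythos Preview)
Your proposal is correct and follows essentially the same route as the paper's proof: direct substitution of (\ref{ed1}) into the factorial-moment definition, cancellation via $[x]_{j,\mathcal{R}(p,q)}\,\mathcal{R}!(p^{x-j},q^{x-j})=\mathcal{R}!(p^{x},q^{x})$, index shift, and recognition of $e_{\mathcal{R}(p,q)}(\epsilon_1^{j}\theta)$; then insertion into (\ref{fm}) for (\ref{ed3}) and a ratio computation for (\ref{edr1}). You are in fact more explicit than the paper about the exponent identity ${m+j\choose 2}={m\choose 2}+{j\choose 2}+mj$, which the paper leaves implicit in passing from the shifted sum to $e_{\mathcal{R}(p,q)}(\epsilon_1^{j}\theta)$.
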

\begin{proof}
From the $j^{th}-$ order $\mathcal{R}(p,q)-$ factorial moment, we get
\begin{eqnarray*}
\mu( [X]_{j,\mathcal{R}(p,q)})&=&\sum_{x=j}^{\infty}[x]_{j,\mathcal{R}(p,q)}\,E_{\mathcal{R}(p,q)}(-\theta)\,\, {\epsilon^{x\choose 2}_1\theta^x\over \mathcal{R}!(p^x,q^x)}.
\end{eqnarray*}
Using the relation
\begin{equation*}
\mathcal{R}(p^x,q^x)^j\, \mathcal{R}!(p^{x-j},q^{x-j})= \mathcal{R}!(p^x,q^x),
\end{equation*}
we obtain
\begin{eqnarray*}
\mu( [X]_{j,\mathcal{R}(p,q)})&=& E_{\mathcal{R}(p,q)}(-\theta)\,\sum_{x=j}^{\infty}\,{\epsilon^{x\choose 2}_1\theta^x\over \mathcal{R}!(p^{x-j},q^{x-j})}\nonumber\\
&=&\theta^j\,\epsilon^{j\choose 2}_1\,E_{\mathcal{R}(p,q)}(-\theta)\,\sum_{h=0}^{\infty}\,{\epsilon^{h\choose 2}_1\big(\epsilon^j_1\theta\big)^h\over [h]_{\mathcal{R}(p,q)}!}\nonumber\\
&=& \theta^j\,\epsilon^{j\choose 2}_1\,E_{\mathcal{R}(p,q)}(-\theta)\,e_{\mathcal{R}(p,q)}(\epsilon^j_1\theta).
\end{eqnarray*}
Furthermore, exploiting the relation (\ref{ed2}) and (\ref{fm}), we obtain (\ref{ed3}). Using the $\mathcal{R}(p,q)-$ factorials,  and after computation,
we get (\ref{edr1}).
	\end{proof}
\begin{corollary}
	The recursion relation for the $q-$ deformed Euler distributions is given by:
	\begin{equation}\label{edrq1}
	P_{x+1}= {\theta \over [x+1]_q}\,P_x,\quad \mbox{with}\, 	P_0=E_{q}(-\theta).
	\end{equation}
\end{corollary}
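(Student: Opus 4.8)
The plan is to obtain the corollary as the direct specialization $\mathcal{R}(p,q)=q$, $\epsilon_1=1$, $\epsilon_2=q$ (the Arick--Coon--Kuryskin deformation) of the general recursion relation (\ref{edr1}) established in the preceding theorem. Since (\ref{edr1}) has already been proved for arbitrary admissible $\mathcal{R}(p,q)$, no fresh induction or differential-operator argument is needed; the entire task reduces to checking that each ingredient on the right-hand side of (\ref{edr1}) collapses to its $q$-deformed counterpart.

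First I would record the three reductions. By the defining relation $[n]_{\mathcal{R}(p,q)}:=\mathcal{R}(p^n,q^n)$ together with the Arick--Coon--Kuryskin choice, one has $\mathcal{R}(p^{x+1},q^{x+1})=[x+1]_{\mathcal{R}(p,q)}=[x+1]_q=\tfrac{1-q^{x+1}}{1-q}$. Next, the factor $\epsilon_1^x=1^x=1$ disappears. Finally, the deformed exponential reduces to the classical $q$-exponential: with $\epsilon_2=q$ and $\mathcal{R}!(p^n,q^n)=[n]_q!$, the series defining $E_{\mathcal{R}(p,q)}$ becomes $E_q(-\theta)=\sum_{n\ge 0}q^{\binom{n}{2}}(-\theta)^n/[n]_q!$, so the initial value $P_0=E_{\mathcal{R}(p,q)}(-\theta)$ becomes $P_0=E_q(-\theta)$. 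Substituting these three facts into (\ref{edr1}) yields at once $P_{x+1}=\dfrac{\theta}{[x+1]_q}\,P_x$ with $P_0=E_q(-\theta)$, which is precisely (\ref{edrq1}).

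As an independent check --- and an alternative self-contained proof --- I would compute the ratio $P_{x+1}/P_x$ directly from the $q$-Euler probability mass function $P_x=E_q(-\theta)\,\theta^x/[x]_q!$ recorded immediately after the Euler-distribution definition: the telescoping $[x]_q!/[x+1]_q!=1/[x+1]_q$ gives $P_{x+1}/P_x=\theta/[x+1]_q$, while $x=0$ returns $P_0=E_q(-\theta)$. Since both routes are pure substitution, there is essentially no obstacle; the only point demanding care is confirming that the normalization $\mathcal{R}!(p^n,q^n)$ and the weight $\epsilon_2^{\binom{n}{2}}$ appearing in $E_{\mathcal{R}(p,q)}$ specialize to $[n]_q!$ and $q^{\binom{n}{2}}$ respectively, so that the stated $P_0$ is genuinely the $q$-exponential $E_q(-\theta)$ and not a rescaled variant.
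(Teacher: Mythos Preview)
Your proposal is correct and matches the paper's approach: the paper records only that the corollary ``stems from a straightforward computation,'' and your specialization of (\ref{edr1}) via $\epsilon_1=1$, $\epsilon_2=q$, $\mathcal{R}(p^{x+1},q^{x+1})=[x+1]_q$, $E_{\mathcal{R}(p,q)}(-\theta)=E_q(-\theta)$ is exactly that computation made explicit. The alternative direct ratio check from the $q$-Euler mass function is a welcome sanity confirmation but not required.
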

\begin{proof}
	It stems from a straightforward computation.
	\end{proof}
	Particular results for the  $q-$ deformation performed in \cite{CA1},   recovered with  $\mathcal{R}(p,q)=q,$ $\epsilon_1=1,$ and $\epsilon_2=q,$ yield
the $j^{th}-$ order  $q-$ factorial moment of $ X$:
\begin{equation*}
\mu_{q}\big( [X]_{j,q}\big) = \theta^j,\quad j\in \mathbb{N} \backslash \{0\},
\end{equation*}
and the $q-$ factorial moments:
\begin{equation*}
\mu_{q}( X)_i=i!\,\sum_{j=i}^{\infty} (-1)^{j-i}\,{\theta^j\over [j]_{q}!} \,(1 -q)^{j-i}\,s_{q}(j,i),
\end{equation*}  
 $i\in\mathbb{N}\backslash\{0\},$ and $s_{q}$ is the $q-$ deformed Stirling number of the first kind.
\begin{remark}
	The generalized $q-$ Quesne Euler distribution corresponding to the choice $\mathcal{R}(p,q)=((q-p^{-1})q)^{-1}(p\,q-1),$  $\epsilon_1=p$ and $\epsilon_2=q^{-1},$ gives the following realization:
	\begin{itemize}
		\item Generalized $q-$ Quesne  probability distribution:
		\begin{equation*}
		P_r( X=x)=E^Q_{p,q}(-\theta)\,\, {\theta^x\,p^Xq^{{x\choose 2}}\over q^x [x]^Q_{p,q}!}\mbox{,}\quad x\in \mathbb{N};
		\end{equation*}
		\item
		 $j^{th}-$ order  generalized $q-$ Quesne factorial moment:
		\begin{equation*}\label{edg2}
		\mu^Q_{p,q}\big([X]^Q_{j,p,q}\big) = \theta^j\,p^{{j\choose 2}}E^Q_{p,q}(-\theta)e^Q_{p,q}(p^j\,\theta),\quad j\in\mathbb{N}\backslash\{0\};
				\end{equation*}
		\item Generalized $q-$ Quesne factorial moment: 
		\begin{equation*}
		\mu^Q_{p,q}[(X)_i]=i!\,\sum_{j=i}^{\infty} (-1)^{j-i}\,{(p -q^{-1})^{j-i}\over p^{-{j\choose 2}+\tau(j-i)}}EU(j,i)\,s^Q_{p,q}(j,i),
		\end{equation*} 
		where:
		\begin{equation*}
		EU(j,i)={\theta^j\,p^{{j\choose 2}}E_{\mathcal{R}(p,q)}(-\theta)e^Q_{p,q}(p^j\,\theta)\over[j]^Q_{p,q}!},
		\end{equation*}
		$\tau\in\mathbb{N},$  $i\in \mathbb{N} \backslash \{0,1\},$ and $s^Q_{p,q}$ is the  generalized $q-$ Quesne  Stirling number of the first kind; 
		\item  Recursion relation for the  generalized $q-$ Quesne distributions:
		\begin{equation*}
		P_{x+1}= {\theta\,p^{x+1}\over q\, [x+1]^Q_{p,q}}\,P_x,\quad \mbox{with}\quad 	P_0=E^Q_{p,q}(-\theta).
		\end{equation*}
	\end{itemize}
\end{remark}
\subsection{$\mathcal{R}(p,q)-$ deformed P\'olya distribution}
 We here assume that  boxes are successively drawn one after the other from an urn, initially containing $r$ white and $s$ black boxes.
After each drawing, the drawn box is placed back in the urn together with $x$ boxes of the same color.
 We suppose that the probability of drawing a white box at the $i^{th}$ drawing, given that $j-1$ white boxes are drawn in the previous $i-1$ drawings, is given as:
\begin{equation*}
P_{i,j}={[r+x(j-1)]_{\mathcal{R}(p,q)}\over [r+s+x(i-1)]_{\mathcal{R}(p,q)}}={[m-j+1]_{\mathcal{R}(p^{-x},q^{-x})}\over [m+u-i+1]_{\mathcal{R}(p^{-x},q^{-x})}},
\end{equation*}
where $j\in\{1,2,\cdots,i\},$ $ i\in\mathbb{N}\backslash\{0\},$ $0<q<p\leq 1,$  $m={-r/ x},$  $u={-s/ x},$ and $x$ is a non zero integer.
We call this  model  the $\mathcal{R}(p,q)-$ P\'olya urn model.
Setting $\mathcal{R}(p,q)=q,$ $\epsilon_1=1$, and $\epsilon_2=q$,  we obtain the probability for  $q-$  P\'olya urn model described in \cite{CA2}:
\begin{equation*}
P_{i,j}={[r+x(j-1)]_{q}\over [r+s+x(i-1)]_{q}}={[m-j+1]_{q^{-x}}\over [m+u-i+1]_{q^{-x}}}.
\end{equation*}
 Let $T_n$ be the number of white boxes drawn in $n$ drawings.
Then, we have:
\begin{definition}
	The $\mathcal{R}(p,q)-$ deformed P\'olya distribution,  with parameters $m,$ $u,$ $n,$ $p,$ and $q,$ is defined by:
	\begin{small}
	\begin{eqnarray}\label{po1}
	P_\kappa:=P_r( T_n=\kappa)=\Psi(p,q)\bigg[{n \atop \kappa}\bigg]_{\mathcal{R}(p^{-x},q^{-x})}{[m]_{\kappa,\mathcal{R}(p^{-x},q^{-x})}[u]_{n-\kappa,\mathcal{R}(p^{-x},q^{-x})} \over [m+u]_{n,\mathcal{R}(p^{-x},q^{-x})}},
	\end{eqnarray}
\end{small}
	where  $0<q<p\leq 1,$ $\kappa\in\{0,\cdots,n\},$  $\Psi(p,q)={\epsilon^{-x\kappa(u-n+\kappa)}_1\, \epsilon^{-x(n-\kappa)(m-\kappa)}_2},$ $x$ is an integer  and $\displaystyle\sum_{\kappa=0}^{n}P_r( T_n=\kappa)=1.$
\end{definition}
For $\mathcal{R}(p,q)=q,$   $\epsilon_1=1,$ and $\epsilon_2=q,$
we recover
the $q-$ P\'olya distribution \cite{CA2}:
\begin{equation*}\label{qp}
P_r(T_n=\kappa) = q^{-x(n-\kappa)(m-\kappa)}\bigg[{n \atop \kappa}\bigg]_{q^{-x}}{[m]_{\kappa,q^{-x}}[u]_{n-\kappa, q^{-x}} \over [m+u]_{n,q^{-x}}},\quad x\in\mathbb{N}
\end{equation*}
where $0<q<1$ and $\kappa\in\{0,\cdots,n\}$.
\begin{theorem}
		The $j^{th}-$ order $\mathcal{R}(p,q)-$ deformed factorial moment is given by:
		\begin{small}
		\begin{eqnarray}\label{po3}
		\mu_{\mathcal{R}(p,q)}\big([T_n]_{j,\mathcal{R}(p^{-x},q^{-x})}\big) = {[n]_{j,\mathcal{R}(p^{-x},q^{-x})}\,[m]_{j,\mathcal{R}(p^{-x},q^{-x})}\over [m+u]_{j,\mathcal{R}(p^{-x},q^{-x})}},\, j\in\{1,\cdots,n\},
		\end{eqnarray}
		\end{small}
		while the  factorial moment  is expressed by:
		\begin{eqnarray}\label{po4}
		\mu_{\mathcal{R}(p,q)}[(T_n)_i] = i!\sum_{j=i}^{n}(-1)^{j-i}\bigg[\begin{array}{c} n  \\ j \end{array} \bigg]_{\mathcal{R}(p,q)}{ s_{\mathcal{R}(p^{-x},q^{-x})}(j,i)\over (\epsilon^{-x}_1-\epsilon^{-x}_2)^{i-j}}P(j,i),
		\end{eqnarray}
		where \begin{equation}
		P(j,i)={\epsilon^{j\choose 2}_1\,[m]_{j,\mathcal{R}(p^{-x},q^{-x})}\over \epsilon^{\tau(j-i)}_1\,[m+u]_{j,\mathcal{R}(p^{-x},q^{-x})}},
		\end{equation}    
		$\tau\in\mathbb{N},$ $ i\in\{2,\cdots,n\}$ and  $s_{\mathcal{R}(p^{-x},q^{-x})}$ is the $\mathcal{R}(p,q)-$ deformed Stirling number of the first kind.	
		 The recursion relation for the $\mathcal{R}(p,q)-$ P\'olya distributions is given as follows:
		\begin{equation}
		P_{\kappa +1} ={ \epsilon^{x(n+m-2\kappa-1)}_2\over \epsilon^{x(u-n+2\kappa+1)}_1} {[n-\kappa]_{\mathcal{R}(p^{-x},q^{-x})}\over [u-n+\kappa+1]_{\mathcal{R}(p^{-x},q^{-x})}}\,{[m-\kappa]_{\mathcal{R}(p^{-x},q^{-x})}\over [\kappa+1]_{\mathcal{R}(p^{-x},q^{-x})}}P_\kappa,
		\end{equation}
		with
		\begin{equation*}
		P_0= \epsilon^{-x\,m\,n}_2{[u]_{n,\mathcal{R}(p^{-x},q^{-x})} \over [m+u]_{n,\mathcal{R}(p^{-x},q^{-x})}}.
		\end{equation*}
\end{theorem}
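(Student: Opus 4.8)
The plan is to treat the three assertions in turn, mirroring the strategy already used for the $\mathcal{R}(p,q)-$ deformed binomial distribution. For the $j^{th}$-order factorial moment (\ref{po3}), I would start from the definition
\[
\mu_{\mathcal{R}(p,q)}\big([T_n]_{j,\mathcal{R}(p^{-x},q^{-x})}\big) = \sum_{\kappa=j}^{n}[\kappa]_{j,\mathcal{R}(p^{-x},q^{-x})}\,P_\kappa,
\]
and substitute the P\'olya probabilities (\ref{po1}). The key algebraic tools are the factorisation of the deformed binomial coefficient, $[\kappa]_{j}\,\big[{n\atop\kappa}\big]=[n]_{j}\,\big[{n-j\atop\kappa-j}\big]$, together with the splitting of the deformed shifted factorial $[m]_{\kappa}=[m]_{j}\,[m-j]_{\kappa-j}$, both read off directly from the definition (\ref{a1}). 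After pulling the factor $[n]_{j}[m]_{j}/[m+u]_{j}$ out of the sum and shifting the summation index via $\kappa\mapsto\kappa-j$, the residual sum is precisely the normalisation identity of a P\'olya distribution with shifted parameters $m-j$, $u$, $n-j$, and hence equals $1$; this yields (\ref{po3}).

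For the ordinary factorial moment (\ref{po4}), I would invoke the general bridge (\ref{fm}) linking an ordinary factorial moment to its $\mathcal{R}(p,q)-$ deformed counterpart through the Stirling numbers of the first kind. Feeding the closed form (\ref{po3}) into (\ref{fm}) and collecting the surviving constant into the abbreviation $P(j,i)$ produces the stated expansion directly, with the power $(\epsilon_1^{-x}-\epsilon_2^{-x})^{i-j}$ inherited from the base change $p\mapsto p^{-x},\ q\mapsto q^{-x}$ in the Stirling coefficient.

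The recursion relation is the most mechanical step: I would form the ratio $P_{\kappa+1}/P_{\kappa}$ from (\ref{po1}). Using the elementary relation $\big[{n\atop\kappa+1}\big]=\frac{[n-\kappa]}{[\kappa+1]}\big[{n\atop\kappa}\big]$ and the one-step factorial identities $[m]_{\kappa+1}=[m]_{\kappa}\,\mathcal{R}(p^{-x(m-\kappa)},q^{-x(m-\kappa)})$ and $[u]_{n-\kappa}=[u]_{n-\kappa-1}\,\mathcal{R}(p^{-x(u-n+\kappa+1)},q^{-x(u-n+\kappa+1)})$, together with tracking the change in the prefactor $\Psi(p,q)$, the quoted coefficient emerges, while the factor $[m+u]_{n}$ cancels in the ratio; evaluating (\ref{po1}) at $\kappa=0$ returns the initial value $P_0$.

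The main obstacle will be the exponent bookkeeping in the prefactor $\Psi(p,q)=\epsilon_1^{-x\kappa(u-n+\kappa)}\,\epsilon_2^{-x(n-\kappa)(m-\kappa)}$: under the shift $\kappa\mapsto\kappa-j$ in the first part and under $\kappa\mapsto\kappa+1$ in the recursion, these quadratic exponents must be expanded and recombined carefully. A short computation shows that the difference of the $\epsilon_1$-exponents collapses to $-x(u-n+2\kappa+1)$ and that of the $\epsilon_2$-exponents to $x(n+m-2\kappa-1)$, matching exactly the powers appearing in the normalisation of the shifted distribution and in the stated ratio. Once this alignment is verified, everything else reduces to the factorial identities already established in the preceding sections.
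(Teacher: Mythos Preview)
Your proposal is correct and follows essentially the same route as the paper's proof: the paper also starts from the definition of the factorial moment, extracts $[n]_j[m]_j$, and then identifies the remaining sum via the $\mathcal{R}(p,q)$-Vandermonde formula (\ref{vd1}) together with the factorisation $[m+u]_{n}=[m+u]_{j}[m+u-j]_{n-j}$, which is exactly your ``normalisation of the shifted P\'olya distribution'' step in different language; parts (\ref{po4}) and the recursion are handled identically via (\ref{fm}) and the ratio $P_{\kappa+1}/P_\kappa$.
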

\begin{proof}
	 The $j^{th}-$ order $\mathcal{R}(p,q)-$ factorial moment is furnished by the formula
\begin{eqnarray*}
&&\mu_{\mathcal{R}(p,q)}\big([T_n]_{j,\mathcal{R}(p^{-x},q^{-x})}\big) =\sum_{\kappa=j}^{n}[\kappa]_{j,\mathcal{R}(p^{-x},q^{-x})}\Psi(p,q)\bigg[{n \atop \kappa}\bigg]_{\mathcal{R}(p^{-x},q^{-x})}\cr&& \qquad\qquad\qquad\qquad\qquad\qquad \times {[m]_{\kappa,\mathcal{R}(p^{-x},q^{-x})}[u]_{n-\kappa,\mathcal{R}(p^{-x},q^{-x})} \over [m+u]_{n,\mathcal{R}(p^{-x},q^{-x})}}.
\end{eqnarray*}
Using the $\mathcal{R}(p,q)-$ Vandermonde's formula (\ref{vd1}) yields the expression
\begin{equation*}
\mu_{\mathcal{R}(p,q)}\big([T_n]_{j,\mathcal{R}(p^{-x},q^{-x})}\big) ={[m]_{j,\mathcal{R}(p^{-x},q^{-x})}[n]_{j,\mathcal{R}(p^{-x},q^{-x})} \over [m+u]_{n,\mathcal{R}(p^{-x},q^{-x})}}[m+u-j]_{n-j,\mathcal{R}(p^{-x},q^{-x})},
\end{equation*}
for $j\in\{1,\cdots,n\}.$ 
Then, since
\begin{eqnarray*}
[m+u]_{n,\mathcal{R}(p^{-x},q^{-x})}=[m+u]_{j,\mathcal{R}(p^{-x},q^{-x})}[m+u-j]_{n-j,\mathcal{R}(p^{-x},q^{-x})},
\end{eqnarray*}
the relation (\ref{po3}) is obtained. From  (\ref{fm}), with  $[-x]_{\mathcal{R}(p,q)}$  instead of $\mathcal{R}(p,q),$ (\ref{po4}) is deduced. Since \begin{equation*}
[\kappa+1]_{\mathcal{R}(p^{-x},q^{-x})} \bigg[{m \atop \kappa+1}\bigg]_{\mathcal{R}(p^{-x},q^{-x})}=[m-\kappa]_{\mathcal{R}(p^{-x},q^{-x})} \bigg[{m \atop \kappa}\bigg]_{\mathcal{R}(p^{-x},q^{-x})},
\end{equation*}
then, after computation, the recursion relation is found. 
\end{proof}
\begin{corollary}
	The recursion relation for the $q-$ P\'olya distributions is derived as:
	\begin{equation}
	P_{\kappa +1} ={q^{x(n+m-2\kappa-1)}} {[n-\kappa]_{q^{-x}}\over [u-n+\kappa+1]_{q^{-x}}}\,{[m-\kappa]_{q^{-x}}\over [\kappa+1]_{q^{-x}}}P_\kappa,
	\end{equation}
	with 
	\begin{equation}
	P_0= q^{-x\,m\,n}{[u]_{n, q^{-x})} \over [m+u]_{n, q^{-x}}}.
	\end{equation}
\end{corollary}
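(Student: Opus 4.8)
The plan is to obtain this Corollary as an immediate specialization of the $\mathcal{R}(p,q)$-deformed P\'olya recursion established in the preceding Theorem, rather than deriving it anew. I would begin from the general recursion
\[
P_{\kappa +1} = \frac{\epsilon_2^{x(n+m-2\kappa-1)}}{\epsilon_1^{x(u-n+2\kappa+1)}}\,\frac{[n-\kappa]_{\mathcal{R}(p^{-x},q^{-x})}}{[u-n+\kappa+1]_{\mathcal{R}(p^{-x},q^{-x})}}\,\frac{[m-\kappa]_{\mathcal{R}(p^{-x},q^{-x})}}{[\kappa+1]_{\mathcal{R}(p^{-x},q^{-x})}}\,P_\kappa,
\]
together with its initial value $P_0=\epsilon_2^{-xmn}[u]_{n,\mathcal{R}(p^{-x},q^{-x})}/[m+u]_{n,\mathcal{R}(p^{-x},q^{-x})}$, and then insert the parameter choices defining the $q$-deformation, namely $\mathcal{R}(p,q)=q$, $\epsilon_1=1$, and $\epsilon_2=q$, as recorded in case (i) of the classification of the $\epsilon_i$ functions.

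First I would simplify the prefactor. Because $\epsilon_1=1$, the power $\epsilon_1^{x(u-n+2\kappa+1)}$ in the denominator equals $1$ and drops out, leaving only $\epsilon_2^{x(n+m-2\kappa-1)}$, which becomes $q^{x(n+m-2\kappa-1)}$ upon setting $\epsilon_2=q$. Next I would observe that every $\mathcal{R}(p^{-x},q^{-x})$-number occurring in the recursion reduces, under $\mathcal{R}(p,q)=q$, to the corresponding $q^{-x}$-number; hence $[n-\kappa]_{\mathcal{R}(p^{-x},q^{-x})}$, $[u-n+\kappa+1]_{\mathcal{R}(p^{-x},q^{-x})}$, $[m-\kappa]_{\mathcal{R}(p^{-x},q^{-x})}$ and $[\kappa+1]_{\mathcal{R}(p^{-x},q^{-x})}$ pass to their $q^{-x}$ analogues. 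Reassembling these factors reproduces precisely the stated recursion for $P_{\kappa+1}$. I would then treat the initial condition in the same way: substituting $\epsilon_2=q$ and reducing the deformed numbers gives $P_0=q^{-xmn}[u]_{n,q^{-x}}/[m+u]_{n,q^{-x}}$, as asserted.

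There is no substantive obstacle in this argument, since it is pure substitution into a formula already proved. The only points meriting a brief check are the exponent bookkeeping in the prefactor once $\epsilon_1=1$ annihilates the $u-n+2\kappa+1$ contribution, and the consistency of this specialization with the $q$-P\'olya mass function recorded just before the Theorem, so that the recursion indeed governs the same distribution; both verifications are routine, which is exactly why the Corollary follows by straightforward deduction.
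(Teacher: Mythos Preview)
Your proposal is correct and matches the paper's own approach: the paper's proof consists of the single line ``It is straightforward,'' meaning exactly the specialization $\epsilon_1=1$, $\epsilon_2=q$, $\mathcal{R}(p,q)=q$ into the general recursion of the preceding Theorem that you carry out in detail. Your exponent bookkeeping and reduction of the deformed numbers to their $q^{-x}$ analogues are correct, so nothing further is needed.
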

\begin{proof}
	It is straightforward.
\end{proof}
\begin{remark}
		 For $x=-1$,  we obtain the $\mathcal{R}(p,q)-$ hypergeometric distribution:
		\begin{eqnarray}\label{hd}
		P_r\big(T_n=\kappa\big)&=&\Phi(p,q)\bigg[\begin{array}{c} n  \\ \kappa\end{array} \bigg]_{\mathcal{R}(p,q)}\, {[m]_{\kappa,\mathcal{R}(p,q)}\,[u]_{n-\kappa,\mathcal{R}(p,q)}\over [m+u]_{n,\mathcal{R}(p,q)}},
		\end{eqnarray}
		where  $0<q<p\leq 1,$ $\kappa\in\{0,\cdots,n\},$ 
		$\Phi(p,q)={\epsilon^{\kappa(u-n+\kappa)}_1\, \epsilon^{(n-\kappa)(m-\kappa)}_2}$
		and
		$\displaystyle\sum_{\kappa=0}^{n}P_r( T_n=\kappa)=1.$
\end{remark}
The particular case of $q-$ deformation obtained in \cite{CA2} is characterized by
	the $j^{th}-$ order $q-$ deformed factorial moment:
	\begin{eqnarray}
	\mu_{q}\big([T_n]_{j, q^{-x}}\big) = {[n]_{j, q^{-x}}\,[m]_{j, q^{-x}}\over [m+u]_{j, q^{-x}}},\quad j\in\{1,\cdots,n\},
	\end{eqnarray}
and
the factorial moment:
	\begin{eqnarray}
	\mu_{q}[(T_n)_i] = i!\sum_{j=i}^{n}(-1)^{j-i}\bigg[\begin{array}{c} n  \\ j \end{array} \bigg]_{q}{ s_{q^{-x}}(j,i)\over (1-q^{-x})^{i-j}}{[m]_{j,q^{-x}}\over [m+u]_{j,q^{-x}}},
	\end{eqnarray} 
	where    
	 $ i\in\{1,2,\cdots,n\}$  and  $s_{q^{-x}}$ is the $q-$ deformed Stirling number of the first kind.
	 \begin{remark} The particular case of $q-$generalized Quesne deformation is characterized by the following properties:
	 	\begin{itemize}
	 		\item
	 		The   generalized $q-$ Quesne probability distribution:
	 		\begin{equation}
	 		P_r( T_n=\kappa)={q^{x(m-\kappa)(n-\kappa)}\over p^{x\,\kappa(u-n+\kappa)}}\bigg[{n \atop \kappa}\bigg]^Q_{p^{-x},q^{-x}}{[m]^Q_{\kappa,p^{-x},q^{-x}}[u]^Q_{n-\kappa,p^{-x},q^{-x}} \over [m+u]^Q_{n,p^{-x},q^{-x}}};
	 		\end{equation}
	 		\item
	 		The $j^{th}-$ order  generalized $q-$ Quesne factorial moment:
	 		\begin{eqnarray}
	 		\mu^Q_{p,q}\big([T_n]^Q_{j,p^{-x},q^{-x}}\big) =\Big({q\over p}\Big)^{-x\,j} {[n]^Q_{j,p^{-x},q^{-x}}\,[m]^Q_{j,p^{-x},q^{-x}}\over [m+u]^Q_{j,p^{-x},q^{-x}}},\, j\in\{1,2,\cdots, n\};
	 		\end{eqnarray}
	 		\item The  generalized $q-$ Quesne factorial moment:
	 		\begin{equation}
	 		\mu^Q_{p,q}[(T_n)_i] = i!\sum_{j=i}^{n}(-1)^{j-i}\bigg[\begin{array}{c} n  \\ j \end{array} \bigg]^Q_{p^{-x},q^{-x}}{s^Q_{p^{-x},q^{-x}}(j,i)\,\,P(j,i)},
	 		\end{equation}
	where 
	\begin{equation}
	P(j,i)={p^{j\choose 2}\,(p^{-x}-q^{x})^{j-i}\,[m]^Q_{j,p^{-x},q^{-x}}\over p^{\tau(j-i)}\,[m+u]^Q_{j,p^{-x},q^{-x}}},
	\end{equation}
	$\tau\in\mathbb{N},$ $i\in\{2,\cdots,n\}$ and $s^Q_{p^{-x},q^{-x}}$ is the  generalized $q-$ Quesne Stirling number of the first kind;
	\item
		The recursion relation for the  generalized $q-$ Quesne distributions:
		\begin{equation}
		P_{\kappa +1} ={  p^{x(u-n+2\kappa+1)}\over q^{x(n+m-2\kappa-1)}} {[n-\kappa]^Q_{p^{-x},q^{-x}}\over [u-n+\kappa+1]^Q_{p^{-x},q^{-x}}}\,{[m-\kappa]^Q_{p^{-x},q^{-x}}\over [\kappa+1]^Q_{p^{-x},q^{-x}}}P_\kappa,
		\end{equation}
		with
		\begin{equation}
		P_0= q^{x\,m\,n}{[u]^Q_{n, p^{-x},q^{-x}} \over [m+u]^Q_{n,p^{-x},q^{-x}}}.
		\end{equation}
	\end{itemize}
\end{remark}
\subsection{Inverse P\'olya distribution}
Let $Y_n$ be a number of black boxes drawn until the $n^{th}$ white box is drawn. Then, we have:
\begin{definition}
	The inverse $\mathcal{R}(p,q)-$ deformed P\'olya distribution with parameters $n,$ $m,$ $u,$ and $\kappa$ is given by:
	\begin{small}
	\begin{eqnarray}\label{ipa}
	{P_y:=}P_r( Y_n = y)= F(p,q)\bigg[{n+y-1\atop y}\bigg]_{\mathcal{R}(p^{-x},q^{-x})}{[m]_{n,\mathcal{R}(p^{-x},q^{-x})}\,[u]_{y,\mathcal{R}(p^{-x},q^{-x})}\over [m+u]_{n+y,\mathcal{R}(p^{-x},q^{-x})}},
	\end{eqnarray}
	\end{small}
	where  $0<q<p\leq 1,$ $y\in\{1,\cdots,n\},$  $F(p,q)=\epsilon^{n(u-x)}_1\,\epsilon^{-yx(m-n+1)}_2,$  $x$ is an integer,
	and
	\begin{equation*}\label{ipb}
	\sum_{t=0}^{\infty}F(p,q)\bigg[{n+y-1\atop y}\bigg]_{\mathcal{R}(p^{-x},q^{-x})}{[m]_{n,\mathcal{R}(p^{-x},q^{-x})}\,[u]_{y,\mathcal{R}(p^{-x},q^{-x})}\over [m+u]_{n+y,\mathcal{R}(p^{-x},q^{-x})}}=1.
	\end{equation*}
\end{definition}
	 Note that 
	the inverse $q-$ deformed P\'olya distribution  obtained in \cite{CA2}   can  be recovered by taking $\mathcal{R}(p,q)=q,$ $\epsilon_1=1,$ $\epsilon_2=q$ and $F(p,q)=q^{-yx(m-n+1)}$ as:
\begin{equation*}\label{qipa}
P_r( Y_n = y):= q^{-yx(m-n+1)}\bigg[{n+y-1\atop y}\bigg]_{q^{-x}}{[m]_{n, q^{-x}}\,[u]_{y,q^{-x}}\over [m+u]_{n+y,q^{-x}}}
\end{equation*}
where  $0<q< 1$ and  $y\in\{1,\cdots,n\}.$
\begin{theorem}
 The $j^{th}-$ order $\mathcal{R}(p,q)-$factorial moment is given by:
		\begin{equation}\label{ipc}
		{\mu}_{\mathcal{R}(p,q)}\big( [Y]_{j,\mathcal{R}(p^{-x},q^{-x})}\big) = {[n-j+1]_{j,\mathcal{R}(p^{-x},q^{-x})}[u]_{\mathcal{R}(j,p^{-x},q^{-x})}\over \epsilon^{jx(m-n+1)}_2[m+j]_{j,\mathcal{R}(p^{-x},q^{-x})}},
		\end{equation}
		where $j\in\mathbb{N}\backslash\{0\}$ and $m+j\neq 0.$
		Moreover, for $i\in\{1,2,\cdots,n\},$  the factorial moment yields:
		\begin{equation}\label{ipd}
		{ \mu}_{\mathcal{R}(p,q)}[(Y_n)_i] = i!\sum_{j=i}^{n}(-1)^{j-i}\bigg[\begin{array}{c} n+j-1  \\ j \end{array} \bigg]_{\mathcal{R}(p^{-x},q^{-x})}{IP(j,i)},
		\end{equation}
		where
		\begin{equation}
		IP(j,i)=s_{\mathcal{R}(p^{-x},q^{-x})}(j,i)\,{\epsilon^{j\choose 2}_1\over \epsilon^{\tau(j-i)}_1}{(\epsilon^{-x}_1-\epsilon^{-x}_2)^{j-i}[u]_{j,\mathcal{R}(p^{-x},q^{-x})}\over [m+j]_{j,\mathcal{R}(p^{-x},q^{-x})}\,\epsilon^{jx(m-n+1)}_2},
		\end{equation} 
		 $\tau\in\mathbb{N}$ and  $s_{\mathcal{R}(p^{-x},q^{-x})}$ is the $\mathcal{R}(p,q)-$ deformed Stirling number of the first kind.
The recursion relation for the inverse $\mathcal{R}(p,q)-$ P\'olya distibutions is provided by:
		\begin{equation}
		P_{y+1} = {\epsilon^{-x(m-n+1)}_2[n+y]_{\mathcal{R}(p^{-x},q^{-x})}[u-y]_{\mathcal{R}(p^{-x},q^{-x})}\over [y+1]_{\mathcal{R}(p^{-x},q^{-x})}[m+u-n-y]_{\mathcal{R}(p^{-x},q^{-x})}}\, P_y,
		\end{equation}
		with  initial condition
		\begin{equation}
		P_0 =\epsilon^{n(u-x)}_1 {[m]_{n,\mathcal{R}(p^{-x},q^{-x})}\over [m+u]_{{n,}\mathcal{R}(p^{-x},q^{-x})}}.	
		\end{equation} 
\end{theorem}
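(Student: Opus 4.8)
The plan is to prove the three assertions in turn, following the template already used for the direct P\'olya distribution. Throughout, every deformed factorial and binomial coefficient is taken in the base $\mathcal{R}(p^{-x},q^{-x})$, so that the negative-index reflection identities recorded in the proof of the Lemma (those expressing $\mathcal{R}(p^{-j},q^{-j})$ and the $\mathcal{R}(p,q)$-factorial of a negative argument through $(\epsilon_1\epsilon_2)$-powers) will be the principal computational tool. For the first part I would start from the definition
\[
\mu_{\mathcal{R}(p,q)}\big([Y]_{j,\mathcal{R}(p^{-x},q^{-x})}\big)=\sum_{y=j}^{\infty}[y]_{j,\mathcal{R}(p^{-x},q^{-x})}\,P_y,
\]
with $P_y$ given by (\ref{ipa}).

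The first step is the falling-factorial/binomial identity
\[
[y]_{j,\mathcal{R}(p^{-x},q^{-x})}\bigg[{n+y-1\atop y}\bigg]_{\mathcal{R}(p^{-x},q^{-x})}=[n+j-1]_{j,\mathcal{R}(p^{-x},q^{-x})}\bigg[{n+y-1\atop y-j}\bigg]_{\mathcal{R}(p^{-x},q^{-x})},
\]
which is the inverse-P\'olya analogue of the relation $[\kappa]_{j}\big[{n\atop\kappa}\big]=[n]_{j}\big[{n-j\atop\kappa-j}\big]$ exploited in the binomial case; it follows at once from (\ref{bc}) by cancelling $[y]!$. This pulls a $j$th-order factorial of $n$ out of the sum and lowers the binomial coefficient. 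Writing $\kappa=y-j$ turns the remaining series into $\sum_{\kappa\ge0}\big[{n+j+\kappa-1\atop\kappa}\big]_{\mathcal{R}(p^{-x},q^{-x})}\,[u-j]_{\kappa}/[m+u]_{n+j+\kappa}$ weighted by the surviving $\epsilon$-powers, while $[u]_{y}=[u]_{j}[u-j]_{\kappa}$ frees the factor $[u]_{j}$ of (\ref{ipc}). This is exactly a negative $\mathcal{R}(p,q)$-Vandermonde series of the type (\ref{nvd1}), equivalently the $1/[v]_{n}$ consequence (\ref{ndv11}) read with $N=n+j$, $v=m+j$ and $u$ replaced by $u-j$; summing it in closed form produces $1/[m+u]_{n+j}$, and the product $[m]_{n}/[m+u]_{n+j}$ telescopes to $1/[m+j]_{j}$. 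Finally the weight $\epsilon_2^{-yx(m-n+1)}$ carried by $F(p,q)$ contributes its $y=\kappa+j$ piece as $\epsilon_2^{-jx(m-n+1)}$, and after applying the reflection identities to the emergent numerator factorial one obtains the factor $[n-j+1]_{j,\mathcal{R}(p^{-x},q^{-x})}$, giving (\ref{ipc}).

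The second assertion is then immediate: substituting (\ref{ipc}) into the general bridge (\ref{fm}) between ordinary and $\mathcal{R}(p,q)$-factorial moments, read with $\epsilon_i$ replaced by $\epsilon_i^{-x}$ and with the Stirling numbers $s_{\mathcal{R}(p^{-x},q^{-x})}$, and grouping all $j$-dependent but otherwise inert factors into $IP(j,i)$, yields (\ref{ipd}). For the recursion I would form the ratio $P_{y+1}/P_{y}$ directly from (\ref{ipa}). Here $\big[{n+y\atop y+1}\big]/\big[{n+y-1\atop y}\big]=[n+y]/[y+1]$, the factorial ratios give $[u]_{y+1}/[u]_{y}=[u-y]$ and $[m+u]_{n+y+1}/[m+u]_{n+y}=[m+u-n-y]$ (all in base $\mathcal{R}(p^{-x},q^{-x})$), while $F$ contributes the single factor $\epsilon_2^{-x(m-n+1)}$; assembling these produces the stated recursion, and evaluating (\ref{ipa}) at $y=0$, where $\big[{n-1\atop0}\big]=[u]_{0}=1$ and only the $\epsilon_1^{n(u-x)}$ part of $F$ survives, gives the initial value $P_{0}$.

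The main obstacle is purely the exponent bookkeeping in the first part: one must track the $\epsilon_1$- and $\epsilon_2$-weights that the shifted factorial (\ref{a}) and the prefactor $F(p,q)$ attach to each term and reconcile them, after the shift $y\mapsto\kappa+j$, against the weights prescribed by the negative Vandermonde identity (\ref{nvd1}). In particular, the passage from the $j$th-order factorial of $n$ delivered by the binomial identity to the precise form $[n-j+1]_{j,\mathcal{R}(p^{-x},q^{-x})}$ of (\ref{ipc}) relies on the negative-index reflection formula for the deformed factorial in the base $\mathcal{R}(p^{-x},q^{-x})$; it is the consistent application of that reflection, rather than any conceptual difficulty, that makes the computation delicate.
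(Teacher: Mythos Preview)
Your proposal is correct and follows essentially the same approach as the paper: start from the definition of the $j^{\text{th}}$-order factorial moment, use the falling-factorial/binomial identity to extract the $j$-dependent prefactor and reduce the inner sum to the form of the negative $\mathcal{R}(p,q)$-Vandermonde identity (\ref{ndv11}), then telescope via $[m+j]_{n+j}=[m+j]_{j}[m]_{n}$ to obtain (\ref{ipc}), and finally read off (\ref{ipd}) from (\ref{fm}). You even spell out the ratio computation for the recursion, which the paper's proof omits entirely; the only point to flag is that the passage from the factor $[n+j-1]_{j}$ produced by your binomial identity to the printed $[n-j+1]_{j}$ is not a genuine ``reflection'' step but rather a notational quirk of the statement---the paper's own proof simply writes $[n-j+1]_{j}$ without comment at the same place.
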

\begin{proof} Using (\ref{ipa}), the $j^{th}-$ order $\mathcal{R}(p,q)-$ deformed factorial moment  of $Y_n$ is expressed as follows:
	\begin{small}
\begin{eqnarray*}\label{ipe}
	{\mu}_{\mathcal{R}(p,q)}\big([Y_n]_{j,\mathcal{R}(p^{-x},q^{-x})}\big) &=&\sum_{y=j}^{\infty}[y]_{j,\mathcal{R}(p^{-x},q^{-x})}Pr( Y_n =y)\nonumber\\
	&=& {[n-j+1]_{j,\mathcal{R}(p^{-x},q^{-x})}[u]_{j,\mathcal{R}(p^{-x},q^{-x})}[m]_{n,\mathcal{R}(p^{-x},q^{-x})}\over \epsilon^{-n(u-x)}_1\epsilon^{yx(m-n+1)}_2}\nonumber\\
	&\times& \sum_{y=j}^{\infty}\bigg[{n+y-1\atop y-j}\bigg]_{\mathcal{R}(p^{-x},q^{-x})}{[u-j]_{y-j,\mathcal{R}(p^{-x},q^{-x})}\over [m+u]_{n+y,\mathcal{R}(p^{-x},q^{-x})}}.
\end{eqnarray*}
\end{small}
From the negative $\mathcal{R}(p,q)-$deformed Vandermonde's formula (\ref{ndv11}), we have  the relation
\begin{eqnarray*}\label{ipf}
{\mu}_{\mathcal{R}(p,q)}( [Y_n]_{j,\mathcal{R}(p^{-x},q^{-x})})&=&{[n-j+1]_{j,\mathcal{R}(p^{-x},q^{-x})}[u]_{j,\mathcal{R}(p^{-x},q^{-x})}\over \epsilon^{jx(m-n+1)}_2}\nonumber\\
&\times& {[m]_{n,\mathcal{R}(p^{-x},q^{-x})}\over [m+j]_{n+j,\mathcal{R}(p^{-x},q^{-x})}}.
\end{eqnarray*}
Since,
\begin{equation*}\label{ipg}
[m+j]_{n+j,\mathcal{R}(p^{-x},q^{-x})}=[m+j]_{j,\mathcal{R}(p,q)}[m]_{n,\mathcal{R}(p^{-x},q^{-x})},
\end{equation*}
 the relation (\ref{ipc}) holds. Furthermore, the relation (\ref{ipd}) is deduced using the $\mathcal{R}(p,q)-$ deformed factorial moment and (\ref{fm}). 
\end{proof}
\begin{corollary}
	The recursion relation for the inverse $q-$ P\'olya distributions is given by the formula:
	\begin{equation}
	P_{y+1} = {q^{-x(m-n+1)}[n+y]_{q^{-x}}[u-y]_{q^{-x}}\over [y+1]_{q^{-x}}[m+u-n-y]_{q^{-x}}}P_y,\quad\mbox{with}\quad P_0 = {[m]_{n,q^{-x}}\over [m+u]_{{n,}q^{-x}}}.
	\end{equation}
\end{corollary}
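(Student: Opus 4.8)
The plan is to obtain this corollary as a direct specialization of the general $\mathcal{R}(p,q)-$deformed recursion established in the preceding theorem. Recall that the inverse $\mathcal{R}(p,q)-$ P\'olya recursion reads
\begin{equation*}
P_{y+1} = \frac{\epsilon_2^{-x(m-n+1)}[n+y]_{\mathcal{R}(p^{-x},q^{-x})}[u-y]_{\mathcal{R}(p^{-x},q^{-x})}}{[y+1]_{\mathcal{R}(p^{-x},q^{-x})}[m+u-n-y]_{\mathcal{R}(p^{-x},q^{-x})}}\,P_y,
\end{equation*}
with $P_0=\epsilon_1^{n(u-x)}[m]_{n,\mathcal{R}(p^{-x},q^{-x})}/[m+u]_{n,\mathcal{R}(p^{-x},q^{-x})}$. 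Thus the whole task reduces to inserting the parameter choices defining the $q-$ Arick-Coon-Kuryskin deformation, namely $\mathcal{R}(p,q)=q$, $\epsilon_1=1$ and $\epsilon_2=q$, which are exactly the specializations used throughout the paper to recover the $q-$ analogs.

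First I would record the effect of this choice on the deformed numbers. Since $\epsilon_1=1$, the $p-$ dependence drops out and $[n]_{\mathcal{R}(p^{-x},q^{-x})}$ collapses to the ordinary $q-$ number $[n]_{q^{-x}}=(1-q^{-xn})/(1-q^{-x})$; the same replacement holds for every deformed factorial $[\,\cdot\,]_{\,\cdot\,,\mathcal{R}(p^{-x},q^{-x})}$ occurring in the recursion. Applying this substitution termwise to the numerator and denominator converts each $\mathcal{R}(p^{-x},q^{-x})-$ bracket into the corresponding $q^{-x}-$ bracket, leaving the structural form of the quotient unchanged.

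Next I would simplify the two prefactors. In the multiplicative constant of the recursion, $\epsilon_2^{-x(m-n+1)}$ becomes $q^{-x(m-n+1)}$ under $\epsilon_2=q$, yielding precisely the announced coefficient. For the initial value, the prefactor $\epsilon_1^{n(u-x)}$ equals $1^{n(u-x)}=1$ because $\epsilon_1=1$, so $P_0$ reduces to $[m]_{n,q^{-x}}/[m+u]_{n,q^{-x}}$, as claimed. Assembling these pieces gives the stated formula.

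There is essentially no obstacle: the result is a routine specialization, and the only point requiring a moment's care is to verify that the choice $\epsilon_1=1$ simultaneously annihilates the $p-$ dependence inside all the brackets and the initial-condition prefactor $\epsilon_1^{n(u-x)}$, while $\epsilon_2=q$ correctly furnishes the surviving power $q^{-x(m-n+1)}$. Once these checks are made, the corollary follows by inspection, in agreement with the \emph{straightforward} proof indicated in the text.
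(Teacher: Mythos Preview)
Your proposal is correct and matches the paper's approach: the corollary is obtained by direct specialization of the general $\mathcal{R}(p,q)$ recursion to the $q$-case via $\epsilon_1=1$, $\epsilon_2=q$, which is precisely the pattern the paper follows for all analogous corollaries (each dispatched as ``straightforward''). Your checks on the prefactors $\epsilon_2^{-x(m-n+1)}\to q^{-x(m-n+1)}$ and $\epsilon_1^{n(u-x)}\to 1$ are exactly what is needed.
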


Finally, it is worthy to note that,  for the generalized $q-$Quesne quantum algebra, we have:
	\begin{itemize}
		\item 
 The inverse generalized $q-$ Quesne P\'olya  probability distribution: 
		\begin{equation*}\label{ipga}
		Pr( Y_n = y)= {p^{n(u-x)}\over q^{xy(m-n+1)}}\bigg[{n+y-1\atop y}\bigg]^Q_{p^{-x},q^{-x}}{[m]^Q_{n,p^{-x},q^{-x}}\,[u]^Q_{y,p^{-x},q^{-x}}\over [m+u]^Q_{n+y,p^{-x},q^{-x}}};
		\end{equation*}
		\item The $j^{th}-$ order generalized $q-$ Quesne factorial moment:
		\begin{equation*}
		{\mu}^Q_{p,q} [Y]^Q_{j,p^{-x},q^{-x}} = {[n-j+1]^Q_{j,p^{-x},q^{-x}}[u]^Q_{j,p^{-x},q^{-x}} \over q^{-j\,x(m-n+1)}[m+j]^Q_{j,p^{-x},q^{-x}}}\big({q\over p}\big)^{-x\,j}
		\end{equation*}
		with the associated factorial moment
		\begin{equation*}
		{ \mu}^Q_{p,q}[(Y_n)_i] = i!\sum_{j=i}^{n}(-1)^{j-i}\bigg[\begin{array}{c} n+j-1  \\ j \end{array} \bigg]^Q_{p^{-x},q^{-x}}{s^Q_{p^{-x},q^{-x}}(j,i)\,IP(j,i)},
		\end{equation*}
		where
		\begin{equation*}
		 IP(j,i)={p^{j\choose 2}\over p^{\tau(j-i)}}{(p^{-x}-q^{x})^{j-i}
			[u]^Q_{j,p^{-x},q^{-x}} \over
			q^{-j\,x(m-n+1)}
			[m+j]^Q_{j,p^{-x},q^{-x}}},
			\end{equation*}
		 $s^Q_{p^{-x},q^{-x}}$ is the generalized $q-$ Quesne Stirling number of the first kind and $\tau\in\mathbb{N};$
		\item The recursion relation for the inverse generalized $q-$ Quesne {P\'olya} distrubtions:
		\begin{small}
		\begin{eqnarray*}
		P_{y+1} = {q^{x(m-n+1)}[n+y]^Q_{p,q}[u-y]^Q_{p,q}\over [y+1]^Q_{p,q}[m+u-n-y]^Q_{p,q}}P_y,\quad\mbox{with}\quad P_0 = {p^{n(u-x)}\,[m]^Q_{n,p^{-x},q^{-x}}\over [m+u]^Q_{n,p^{-x},q^{-x}}}.
		\end{eqnarray*}
		\end{small} 
		\end{itemize}
\section{Concluding  remarks}
 $\mathcal{R}(p, q)$-deformed univariate  dicrete binomial, Euler, P\'olya and inverse P\'olya distributions, induced by  the $\mathcal{R}(p, q)$-deformed quantum algebras, have been formulated and discussed in a general framework. Distributions and main properties related to  the  generalized $q-$ Quesne algebra have been considered as illustration.
Results for known $q-$ deformed distributions have also been recovered as particular cases.
\par\bigskip\noindent
{\bf Acknowledgment.} This work is supported by TWAS Research Grant RGA No. 17 - 542 RG / MATHS / AF / AC \_G  -FR3240300147.
 MNH thanks Professor Kalyan Sinha for his constant solicitude, and Professor Aurel I. Stan  for his hospitality during the QP40 Conference. 
\bibliographystyle{amsplain}

\end{document}